\documentclass[11pt]{amsart}
\usepackage{amssymb}
\usepackage[T1]{fontenc}

%Some basic formatting
\usepackage{anysize}

\marginsize{2.5cm}{2.5cm}{2.5cm}{2.5cm}
\linespread{1.3}

%Environments
\theoremstyle{plain}
\newtheorem{theorem}{Theorem}
\newtheorem{proposition}{Proposition}
\newtheorem{lemma}{Lemma}
\newtheorem{corollary}{Corollary}

\theoremstyle{remark}
\newtheorem{remark}{Remark}

%Title, authors, etc...
\title{There are infinitely many rational Diophantine sextuples}

\author{Andrej Dujella}
\address{Department of Mathematics, University of Zagreb, Bijeni\v{c}ka cesta 30, 10000 Zagreb, Croatia}
\email{duje@math.hr}
\author{Matija Kazalicki}
\address{Department of Mathematics, University of Zagreb, Bijeni\v{c}ka cesta 30, 10000 Zagreb, Croatia}
\email{matija.kazalicki@math.hr}
\author{Miljen Miki\'c}
\address{Kumi\v{c}i\'ceva 20, 51000 Rijeka, Croatia}
\email{miljen.mikic@gmail.com}
\author{M\'arton Szikszai}
\address{Institute of Mathematics\\University of Debrecen\\Debrecen, P.O. Box 12.\\H-4010, Hungary}
\email{szikszai.marton@science.unideb.hu}

\keywords{Diophantine sextuples, elliptic curve}

\subjclass[2010]{11D09, 11G05, 11Y50}

%\thanks{A.D. was supported by the Croatian Science Foundation under the project no. 6422.}

\begin{document}

%Abstract
\begin{abstract}
A rational Diophantine $m$-tuple is a set of $m$ nonzero rationals such that the product of any two of them increased by $1$ is a perfect square. The first rational Diophantine quadruple was found by Diophantus, while Euler proved that there are infinitely many rational Diophantine quintuples. In 1999, Gibbs found the first example of a rational Diophantine sextuple. In this paper, we prove that there exist infinitely many rational Diophantine sextuples.
\end{abstract}

%Title
\maketitle

%Introduction
\section{Introduction}

\noindent A set of $m$ nonzero rationals $\{a_1,a_2,\ldots,a_m\}$ is called {\em a rational Diophantine $m$-tuple} if $a_ia_j+1$ is a perfect square for all $1\leq i<j\leq m$. An open question is how large a rational Diophantine tuple can be. In the case of integer Diophantine tuples, the corresponding question has been almost completely answered. That is to say, it is well-known and easy to prove that there exist infinitely many integer Diophantine quadruples (e.g. $\{k-1,k+1,4k,16k^3-4k\}$ for $k\geq 2$), while it was proved in \cite{D-crelle} that an integer Diophantine sextuple does not exist and that there are only finitely many such quintuples (for additional information and references see e.g. \cite{D-web}, \cite[Section D29]{Guy}). However, concerning rational Diophantine tuples, no absolute upper bound for the size of such sets is known. The first example of a rational Diophantine quadruple was the set
$$
\left\{\frac{1}{16},\, \frac{33}{16},\, \frac{17}{4},\, \frac{105}{16}\right\}
$$
found by Diophantus (see \cite{Dio}). Euler found infinitely many rational Diophantine quintuples (see \cite{Hea}),  e.g. he was able to extend the integer Diophantine quadruple
$$
\{1,3,8,120\}
$$
found by Fermat, to the rational quintuple
$$
\left\{ 1, 3, 8, 120, \frac{777480}{8288641} \right\}.
$$
Let us note that Baker and Davenport \cite{B-D} proved that Fermat's set cannot be extended to an integer Diophantine quintuple, while Dujella and Peth\H{o} \cite{Du-Pe} showed that there is no integer Diophantine quintuple which contains the pair $\{1,3\}$.
For results on the existence of infinitely many rational $D(q)$-quintuples,
 i.e. sets in which $xy+q$ is always a square, for $q\neq 1$ see \cite{D-F}.

Since 1999, several examples of rational Diophantine sextuples were found by Gibbs \cite{Gibbs1,Gibbs2} and Dujella \cite{D-sex}. The first example, found by Gibbs, was
$$
\left\{ \frac{11}{192}, \frac{35}{192}, \frac{155}{27}, \frac{512}{27}, \frac{1235}{48}, \frac{180873}{16} \right\}.
$$
No example of a rational Diophantine septuple is known. On the other hand, assuming the Lang conjecture on varieties of general type the number of elements of a rational Diophantine tuple can be bound. Indeed, if $\{a_1,a_2,...,a_m\}$ is a rational Diophantine $m$-tuple with $m\ge 5$, then we may consider the hyperelliptic curve
$$
y^2=(a_1x+1)(a_2x+1)(a_3x+1)(a_4x+1)(a_5x+1)
$$
of genus $g=2$. Provided that the above mentioned conjecture holds, Caporaso, Harris and Mazur \cite{C-H-M} proved that for $g\ge 2$ the number  $B(g,\mathbb{K})=\max_{C} |C(\mathbb{K})|$ is finite, where $C$ runs over all curves of genus $g$ over a number field $\mathbb{K}$. Therefore, we get that, under the Lang conjecture, $m \leq 5+B(2,\mathbb{Q})$ (and also  $m\leq 4+B(4,\mathbb{Q})$, see \cite{P-H-Z}).

In this paper, we will use properties of the elliptic curve induced by a rational Diophantine triple to prove that infinitely many rational Diophantine sextuples exist. The following theorem is our main result.
\begin{theorem}
\label{tm1}
There are infinitely many rational Diophantine sextuples. Moreover, there are infinitely many rational Diophantine sextuples with positive elements, and also with any combination of signs.
\end{theorem}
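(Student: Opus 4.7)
The cornerstone of my approach is the well-known correspondence between extensions of a rational Diophantine triple $\{a,b,c\}$ and rational points on the associated elliptic curve
$$E_{a,b,c}\colon y^2 = (ax+1)(bx+1)(cx+1):$$
a rational $x \notin \{0,-1/a,-1/b,-1/c\}$ produces a quadruple $\{a,b,c,x\}$ precisely when $(x,y) \in E_{a,b,c}(\mathbb{Q})$ for some $y$. To reach a sextuple, one must find three such extensions $d,e,f$ for which the further three products $de+1$, $df+1$, $ef+1$ are all rational squares. My plan is therefore to produce a one-parameter family of triples $\{a(t),b(t),c(t)\}$ together with three sections on the corresponding elliptic surface whose $x$-coordinates $d(t),e(t),f(t)$ satisfy all three of the remaining compatibility conditions identically in $\mathbb{Q}(t)$.

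To do this, I would start from a multi-parameter rational Diophantine triple — for instance the classical family $\{a, b, a+b+2r\}$ with $r^2 = ab+1$, possibly after a further rational substitution — so that the associated elliptic curve is rich in rational structure: three $2$-torsion points $(-1/a,0),(-1/b,0),(-1/c,0)$, the Diophantus point $(0,1)$, and additional sections that can be produced by imposing a single rational condition on the parameters. I would then look for three rational extensions $d,e,f$ whose defining points on $E_{a,b,c}$ are related by the group law in a way that forces their pairwise compatibilities; for example, writing $e=P+Q$ and $f=P-Q$ for two auxiliary sections $P,Q$ exploits the symmetry of the addition formula so that $ef+1$ factors as a perfect square of a rational function of $t$, and a careful arrangement should do the same for $de+1$ and $df+1$. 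All the resulting square identities would finally be verified symbolically.

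The principal obstacle is precisely this middle step: making three codimension-one square conditions into \emph{simultaneous identities} on a rational curve, rather than just meeting in isolated points. This is where the geometry of $E_{a,b,c}$ is indispensable — the action of the $2$-torsion pairs extensions into compatible pairs (the ``regular'' quadruple and quintuple constructions), and the initial family must be chosen so that these pairings cascade into a triple of mutually compatible sections rather than giving only one pair. Once a parametric sextuple in $\mathbb{Q}(t)$ is obtained, passing to infinitely many distinct rational sextuples is routine: at least one coordinate is a non-constant rational function of $t$, so generic rational specializations yield pairwise distinct sextuples, and open subintervals of $\mathbb{Q}$ together with the symmetry $a_i \mapsto -a_i$, $x \mapsto -x$ of the defining equations allow every prescribed sign pattern — in particular the all-positive case — to be realized infinitely often.
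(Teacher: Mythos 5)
Your plan correctly identifies the ambient framework (the induced curve $y^2=(ax+1)(bx+1)(cx+1)$, the pairing of extensions via the group law, and the passage from a parametric family over $\mathbb{Q}(t)$ to infinitely many sextuples with prescribed signs), and your choice $e=x(T+S)$, $f=x(T-S)$ is indeed the right shape: this is the known ``regular'' construction that makes $x(T)x(T\pm S)+1$ a square. But the step you flag as ``the principal obstacle'' --- forcing the one remaining condition $x(T-S)\,x(T+S)+1=\square$ to hold identically --- is precisely the content of the paper, and your proposal does not supply the idea that resolves it. The paper's mechanism is a clean algebraic identity: if $Q$, $T$, $[0,\alpha]$ lie on $y^2=f(x)$ with $f$ monic cubic, then $x(Q)x(T)x(Q+T)+\alpha^2$ is a square (proved by observing that $f(x)-(x-x(Q))(x-x(T))(x-x(Q+T))$ is a quadratic vanishing to order two on a line, hence a square of a linear form). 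Applying this on the monic model $y^2=(x+ab)(x+ac)(x+bc)$ with $Q=[2]S'$, the third condition becomes automatic exactly when $x([2]S')=1=x(S')$, i.e.\ when $S'=[1,rst]$ is a point of \emph{order three}. Without isolating this torsion condition, ``a careful arrangement should do the same for $de+1$ and $df+1$'' remains a hope rather than an argument: the three compatibility conditions are genuinely codimension one each, and no generic choice of sections satisfies them simultaneously.

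Moreover, even once the order-three condition is identified, producing triples that satisfy it is itself nontrivial and occupies most of the paper: the condition $[3]S'=\mathcal{O}$ translates into a symmetric polynomial relation among $a,b,c$, which is solved by expressing $\sigma_2$ in terms of $\sigma_1,\sigma_3$, demanding that the cubic $X^3-\sigma_1X^2+\sigma_2X-\sigma_3$ have square discriminant, and setting $\sigma_3=\frac{t^2-1}{2t}$; this yields a second elliptic curve over $\mathbb{Q}(t)$ with a point $R=[0,(t^2+1)^3]$ of infinite order whose multiples parametrize candidate triples. Finally, one must still prove that the resulting cubic actually \emph{splits} over $\mathbb{Q}(t)$ (so that $a,b,c$ are rational rather than conjugate in a cyclic cubic field); the paper does this via an analysis of reduction types of an auxiliary curve together with Hilbert irreducibility, and separately by explicit formulas on a $3$-isogenous curve. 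None of these obstructions appear in your plan, so as written it is an outline of the known quintuple machinery plus an accurate statement of where the difficulty lies, not a proof of the theorem.
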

\noindent Since by multiplying all elements of a Diophantine $m$-tuple by $-1$ we obtain a Diophantine $m$-tuple again, it suffices to consider sextuples with zero, one, two and three negative elements.

In brief, this is the way we construct Diophantine sextuples. For a rational number $t\notin \{-1,0,1\}$ let
\begin{equation}\label{eq:E}
E:\quad y^2=x^3+3(t^2-3t+1)(t^2+3t+1)x^2+3(t^2+1)^4x+(t^2+1)^6
\end{equation}
be an elliptic curve with a rational point $R=[0,(t^2+1)^3]$ of infinite order. Rational points on this curve parametrize a certain class of elliptic curves over rational numbers with a square discriminant and prescribed rational point of order $3$. More precisely, to any multiple $[m]R=(x,y)$ of the point $R$ (with $m>1$), we associate an elliptic curve
$$
E':\quad Y^2=X^3+ \sigma_2 X^2 + \sigma_1\sigma_3 X + \sigma_3^2,
$$
where $\sigma_1, \sigma_2$ and $\sigma_3$ are certain explicit rational functions in $x$ and $t$ (e.g. $\sigma_3=\frac{t^2-1}{2t}$). This correspondence is explained in detail in Sections \ref{sec2} and \ref{sec3}.

In Sections \ref{sec4} and \ref{sec5}, we study the primes (and their type) of bad reduction of $E'$ and we show that the curve has rational $2$-torsion for every integer $t$ for which $t^2+1$ is squarefree. Using an effective version of Hilbert's Irreducibility Theorem, we extend this result to any rational $t\notin \{-1,0,1\}$.

Now we can rewrite $E'$ as
$$
Y^2=(X+ab)(X+ac)(X+bc)
$$
for $a,b,c\in \mathbb{Q}$, where $\sigma_1=a+b+c$, $\sigma_2=ab+ac+bc$ and $\sigma_3=abc$. One has that $P'=[0,abc]$ is a point of infinite order, while $S'=[1,\sqrt{(ab+1)(ac+1)(bc+1)}]$ is (prescribed by construction) a rational point of order $3$. Finally, any odd multiple $[2n+1]P'$ of point $P'$ gives a rational Diophantine sextuple
$$
\left\{a,b,c,\frac{x([2n+1]P')}{abc},\frac{x([2n+1]P'+S')}{abc}, \frac{x([2n+1]P'-S')}{abc} \right\},
$$
as explained in Section \ref{sec2}. The elements of the sextuple obtained with the construction, which is briefly explained here, are in fact rational functions of a rational parameter $t$ (for explicit formulas see the end of Section \ref{sec3}).

%Induced elliptic curves
\section{Induced elliptic curves}
\label{sec2}

\noindent Let $\{a,b,c\}$ be a rational Diophantine triple. In order to extend this triple to a quadruple, we have to solve the system \begin{equation}
\label{2e}
ax+1=\Box,\qquad bx+1=\Box,\qquad cx+1=\Box.
\end{equation}
It is natural to assign the elliptic curve
\begin{equation}
\label{3e}
\mathcal{E}: \qquad y^2=(ax+1)(bx+1)(cx+1)
\end{equation}
to the system (\ref{2e}). We say the $\mathcal{E}$ is induced by the triple $\{a,b,c\}$. There are three rational points on the $\mathcal{E}$ of order $2$, namely
$$
A=\left[-\frac{1}{a},0\right],\quad B=\left[-\frac{1}{b},0\right],\quad C=\left[-\frac{1}{c},0\right]
$$
and also other obvious rational points
$$
P=[0,1], \quad S=[1/abc, \sqrt{(ab+1)(ac+1)(bc+1)}/abc ].
$$
The $x$-coordinate of a point $T\in \mathcal{E}(\mathbb{Q})$ satisfies (\ref{2e}) if and only if $T-P\in 2\mathcal{E}(\mathbb{Q})$ (see \cite{D-rim}). It can be verified that $S\in 2\mathcal{E}(\mathbb{Q})$. Indeed, if $ab+1=r^2$, $ac+1=s^2$, $bc+1=t^2$, then $S=[2]R$, where
$$
R=\left[ \frac{rs+rt+st+1}{abc}, \frac{(r+s)(r+t)(s+t)}{abc} \right].
$$
This implies that if $x(T)$ satisfies system (\ref{2e}), then also the numbers $x(T\pm S)$ satisfy the system. It was proved in \cite[Proposition 2]{D-acta2} (with slightly different notation and by direct manipulation with algebraic expressions) that $x(T)x(T\pm S)+1$ is always a perfect square. The following statement extends this result and provides the basis for the proof of Theorem \ref{tm1}.

\begin{proposition}
\label{matija}
Let $Q$, $T$ and $[0,\alpha]$ be three rational points on an elliptic curve $\mathcal{E}$ over $\mathbb{Q}$
given by the equation $y^2=f(x)$, where $f$ is a monic polynomial of degree $3$.
Assume that $\mathcal{O} \not\in \{Q,T,Q+T\}$.
Then
$$ x(Q)x(T)x(Q+T)+ \alpha^2 $$
is a perfect square.
\end{proposition}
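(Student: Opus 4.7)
The plan is to reduce the claim to a single polynomial identity coming directly from the chord-and-tangent definition of the group law. The only place where the point $[0,\alpha]$ enters is through the fact that it lies on $\mathcal{E}$, giving $\alpha^2 = f(0)$.

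First I would let $y = \lambda x + \nu$ denote the line passing through $Q$ and $T$: the secant if $Q \neq T$, and the tangent to $\mathcal{E}$ at $Q$ if $Q = T$. The hypothesis $\mathcal{O}\notin\{Q,T,Q+T\}$ is exactly what makes this well-defined in every case: if the secant were vertical we would have $Q = -T$, forcing $Q+T = \mathcal{O}$; and if $Q = T$ lay on a vertical tangent then $y(Q)=0$, again giving $Q+T = 2Q = \mathcal{O}$. So $\lambda,\nu\in\mathbb{Q}$. Substituting $y=\lambda x+\nu$ into $y^2=f(x)$ yields the monic cubic
$$ f(x) - (\lambda x + \nu)^2, $$
whose three roots are, by the chord-and-tangent definition, precisely $x(Q)$, $x(T)$, and $x(Q+T)$ -- the third intersection of the line with $\mathcal{E}$ being $-(Q+T)$, which shares its $x$-coordinate with $Q+T$.

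The conclusion then drops out by evaluating the factorization
$$ f(x) - (\lambda x + \nu)^2 = (x - x(Q))(x - x(T))(x - x(Q+T)) $$
at $x = 0$. This gives $\alpha^2 - \nu^2 = -\,x(Q)\,x(T)\,x(Q+T)$, that is,
$$ x(Q)\,x(T)\,x(Q+T) + \alpha^2 = \nu^2, $$
which is manifestly a rational square.

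There is essentially no technical obstacle: the content of the proposition is the elegant identification of $x(Q)x(T)x(Q+T)+\alpha^2$ with the square of the $y$-intercept of the secant/tangent line computing $Q+T$. The only thing to be careful about is the case analysis for the line itself, and that has already been absorbed into the hypothesis, which rules out precisely the degenerate configurations $Q=-T$ and $Q=T$ of order~$2$.
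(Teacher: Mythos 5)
Your proof is correct and is essentially the paper's own argument: both rest on the identity $f(x)-(\lambda x+\nu)^2=(x-x(Q))(x-x(T))(x-x(Q+T))$ for the secant/tangent line computing $Q+T$, evaluated at $x=0$ (the paper phrases this as the conic $y^2=f(x)-\prod(x-x_i)$ splitting into two lines). Your explicit check that the hypothesis $\mathcal{O}\notin\{Q,T,Q+T\}$ rules out a vertical line is a welcome touch of care that the paper leaves implicit.
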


\begin{proof}
Consider the curve
$$
y^2 = f(x)-(x-x(Q))(x-x(T))(x-x(Q+T)).
$$
It is a conic which contains three collinear points: $Q$, $T$, $-(Q+T)$ (if the points are distinct) or has a tangent line that intersects the conic in another point (if two of the points are equal). Thus, it is the union of two rational lines, e.g. we have
\begin{equation}
\label{ma1}
y^2 = (\beta x + \gamma)^2.
\end{equation}
Inserting $x=0$ in (\ref{ma1}), we get
$$
x(Q)x(T)x(Q+T)+ \alpha^2 = \gamma^2.
$$
\end{proof}

The coordinate transformation $x\mapsto x/abc$, $y\mapsto y/abc$, applied to the curve $\mathcal{E}$
leads to the elliptic curve
$$ E': \qquad y^2=(x+ab)(x+ac)(x+bc) $$
with a monic cubic polynomial on the right hand side. The points $P$ and $S$ become
$P'=[0,abc]$ and $S'=[1, rst]$, respectively.

If we apply Proposition \ref{matija} with $Q=\pm S'$, since the first coordinate of $S'$ is $1$, we get an elegant proof of the fact that $x(T)x(T\pm S)+1$ is a perfect square (after dividing $x(T')x(T'\pm S') + a^2b^2c^2=\Box$ by $a^2b^2c^2$). Now we have a general construction which produces two rational Diophantine quintuples with four joint elements. So, the union of these two quintuples,
$$
\{a,b,c,x(T-S),x(T),x(T+S)\},
$$
is ``almost'' a rational Diophantine sextuple. Assuming that $T,T\pm S \not\in \{\mathcal{O}, \pm P\}$, the only missing condition is $x(T-S)x(T+ S)+1=\Box$. To construct examples satisfying this last condition, we will use Proposition \ref{matija} with $Q=[2]S'$. To get the desired conclusion, we need the condition $x([2]S')=1$ to be satisfied. This leads to $[2]S'=-S'$, i.e. $[3]S'=\mathcal{O}$. The next lemma characterizes triples $\{a,b,c\}$ satisfying this condition.

\begin{lemma}
\label{miljen}
For the point $S'=[1,rst]$ on $E'$ it holds $[3]S'=\mathcal{O}$ if and only if
\begin{eqnarray}
\label{mi1}
& -a^4b^2c^2+2a^3b^3c^2+2a^3b^2c^3-a^2b^4c^2+2a^2b^3c^3-a^2b^2c^4+12a^2b^2c^2 \nonumber \\
& +6a^2bc+6ab^2c+6abc^2+4ab+4ac+4bc+3=0.
\end{eqnarray}
\end{lemma}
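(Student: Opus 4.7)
The idea is to convert the torsion condition $[3]S' = \mathcal{O}$ into a polynomial identity via the chord--tangent duplication formula (equivalently, via the third division polynomial of $E'$), and then to match with (\ref{mi1}) by expanding the elementary symmetric functions of $a,b,c$.

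Since $r,s,t$ are nonzero (as $\{a,b,c\}$ is a Diophantine triple), $S' = (1, rst)$ is neither $\mathcal{O}$ nor a $2$-torsion point. Hence $[3]S' = \mathcal{O}$ is equivalent to $[2]S' = -S'$, which means $x([2]S') = x(S') = 1$. Writing $f(x) = (x+ab)(x+ac)(x+bc)$ and abbreviating $\sigma_1 = a+b+c$, $\sigma_2 = ab+ac+bc$, $\sigma_3 = abc$, expansion gives $f(x) = x^3 + \sigma_2 x^2 + \sigma_1\sigma_3 x + \sigma_3^2$, and in particular $f(1) = 1 + \sigma_2 + \sigma_1\sigma_3 + \sigma_3^2$ and $f'(1) = 3 + 2\sigma_2 + \sigma_1\sigma_3$. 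The duplication formula
$$
x([2]S') = \frac{f'(1)^2}{4f(1)} - \sigma_2 - 2
$$
then turns the condition $x([2]S') = 1$ into $f'(1)^2 = 4f(1)(3 + \sigma_2)$.

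Subtracting and simplifying, this is equivalent to
$$
3 + 4\sigma_2 + 6\sigma_1\sigma_3 + 12\sigma_3^2 + 4\sigma_2\sigma_3^2 - \sigma_1^2\sigma_3^2 = 0,
$$
which (up to sign) is just $\psi_3(1) = 0$ for the standard third division polynomial $\psi_3$ of $E'$; this provides an equivalent and perhaps more conceptual starting point. Expanding each $\sigma_i$ in terms of $a,b,c$ gives precisely (\ref{mi1}): the degree-$8$ part $4\sigma_2\sigma_3^2 - \sigma_1^2\sigma_3^2$ yields the first line $-a^4b^2c^2 + 2a^3b^3c^2 + 2a^3b^2c^3 - a^2b^4c^2 + 2a^2b^3c^3 - a^2b^2c^4$, while $12\sigma_3^2$, $6\sigma_1\sigma_3$, $4\sigma_2$, and the constant $3$ fill in the second line. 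There is no conceptual difficulty; the only work is careful bookkeeping of the monomial expansion.
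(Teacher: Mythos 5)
Your proof is correct and follows essentially the same route as the paper, which likewise reduces $[3]S'=\mathcal{O}$ to the condition $x(S')=x([2]S')$ and then expands; you simply carry out the duplication-formula computation (equivalently $\psi_3(1)=0$) that the paper leaves implicit, and the resulting identity $3+4\sigma_2+6\sigma_1\sigma_3+12\sigma_3^2+4\sigma_2\sigma_3^2-\sigma_1^2\sigma_3^2=0$ does expand to (\ref{mi1}). No issues.
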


\begin{proof}
The statement of lemma follows directly from the condition $x(S')=x([2]S')$.
\end{proof}

%Construction
\section{Constructions of sextuples and proof of Theorem \ref{tm1}} \label{sec3}

\noindent The polynomial in $a,b,c$ on the left hand side of (\ref{mi1}) is symmetric. Thus, by taking $\sigma_1=a+b+c$, $\sigma_2=ab+ac+bc$, $\sigma_3=abc$, we get from (\ref{mi1}) that
\begin{equation}
\label{mi2}
\sigma_2=(\sigma_1^2 \sigma_3^2-12\sigma_3^2-6\sigma_1 \sigma_3-3)/(4+4\sigma_3^2).
\end{equation}
Inserting (\ref{mi2}) in $(ab+1)(ac+1)(bc+1)=(rst)^2$, we get $(2\sigma_3^2+\sigma_1\sigma_3-1)^2/(4+4\sigma_3^2)=(rst)^2$, i.e. $1+\sigma_3^2=\Box$.

The polynomial $X^3-\sigma_1X^2+\sigma_2X-\sigma_3$ should have rational roots, so its discriminant has to be a perfect square.
Inserting (\ref{mi2}) in the expression for the discriminant, we get
\begin{equation}
\label{mi3}
(\sigma_1^3 \sigma_3-9 \sigma_1^2-27\sigma_1 \sigma_3-54\sigma_3^2-27)(1+\sigma_3^2)(\sigma_1 \sigma_3+2\sigma_3^2-1) = \Box.
\end{equation}
For a fixed $\sigma_3$, we may consider (\ref{mi3}) as a quartic in $\sigma_1$. Since $1+\sigma_3^2$ has to be a perfect square, from (\ref{mi3}) we get a quartic with a rational point (point at infinity), which therefore can be transformed into an elliptic curve.

Let us take $\sigma_3=\frac{t^2-1}{2t}$. Then we get the quartic over
$\mathbb{Q}(t)$ which is birationally equivalent to the following elliptic curve over $\mathbb{Q}(t)$
\begin{eqnarray}
\label{ec:1}
y^2=x^3+(3t^4-21t^2+3)x^2+(3t^8+12t^6+18t^4+12t^2+3)x+(t^2+1)^6.
\end{eqnarray}
This elliptic curve has positive rank, since the point $R=[0,(t^2+1)^3]$ is of infinite order. By taking multiples of the point $R$, transforming these coordinates back to the quartic and computing corresponding triples $\{a,b,c\}$, we may expect to get infinitely many
parametric families of rational triples for which the corresponding point $S'$ on $E'$ satisfies $[3]S'= \mathcal{O}$ (we will provide detailed proof of this claim in Sections \ref{sec4} and \ref{sec5}). Since the condition $1+\sigma_3^2=\Box$ implies $rst \in \mathbb{Q}$, and $S'=-[2]S' \in 2E'(\mathbb{Q})$, an explicit $2$-descent on $E'$ implies that $ab+1$, $ac+1$, $bc+1$ are all perfect squares, thus the triple $\{a,b,c\}$ obtained with this construction is indeed a Diophantine triple. In particular, if we take the point $[2]R$, we get the following family of rational Diophantine triples
\begin{eqnarray*}
a &\!\!=\!\!& \frac{18t(t-1)(t+1)}{(t^2-6t+1)(t^2+6t+1)}, \\
b &\!\!=\!\!& \frac{(t-1)(t^2+6t+1)^2}{6t(t+1)(t^2-6t+1)}, \\
c &\!\!=\!\!& \frac{(t+1)(t^2-6t+1)^2}{6t(t-1)(t^2+6t+1)}.
\end{eqnarray*}
Consider now the elliptic curve over $\mathbb{Q}(t)$ induced by the triple $\{a,b,c\}$. It has positive rank since the point $P=[0,1]$ is of infinite order. Thus, the above described construction produces infinitely many rational Diophantine sextuples containing the triple $\{a,b,c\}$. One such sextuple $\{a,b,c,d,e,f\}$ is obtained by taking $x$-coordinates of points $[3]P$, $[3]P+S$, $[3]P-S$. We get $d=d_1/d_2$, $e=e_1/e_2$, $f=f_1/f_2$, where

{\small
\begin{eqnarray*}
d_1&\!\!=\!\!& 6(t+1)(t-1)(t^2+6t+1)(t^2-6t+1)(8t^6+27t^5+24t^4-54t^3+24t^2+27t+8) \\
& & \mbox{}\times (8t^6-27t^5+24t^4+54t^3+24t^2-
27t+8)(t^8+22t^6-174t^4+22t^2+1), \\
d_2&\!\!=\!\!&t(37t^{12}-885t^{10}+9735t^8-13678t^6+9735t^4-885t^2+37)^2, \\
e_1&\!\!=\!\!&
-2t(4t^6-111t^4+18t^2+25)(3t^7+14t^6-42t^5+30t^4+51t^3+18t^2-12t+2) \\
& & \mbox{}\times (3t^7-14t^6-42t^5-
30t^4+51t^3-18t^2-12t-2)(t^2+3t-2)(t^2-3t-2)\\
& & \mbox{}\times (2t^2+3t-1)(2t^2-3t-1)(t^2+7)(7t^2+1), \\
e_2&\!\!=\!\!&
3(t+1)(t^2-6t+1)(t-1)(t^2+6t+1) \\
& & \mbox{}\times (16t^{14}+141t^{12}-1500t^{10}+7586t^8-2724t^6+165t^4+424t^2-12)^2, \\
f_1&\!\!=\!\!&
2t(25t^6+18t^4-111t^2+4)(2t^7-12t^6+18t^5+51t^4+30t^3-42t^2+14t+3)\\
& & \mbox{}\times (2t^7+12t^6+18t^5-
51t^4+30t^3+42t^2+14t-3)(2t^2+3t-1)(2t^2-3t-1)\\
& & \mbox{}\times (t^2-3t-2)(t^2+3t-2)(t^2+7)(7t^2+1), \\
f_2&\!\!=\!\!&
3(t+1)(t^2-6t+1)(t-1)(t^2+6t+1)\\
& & \mbox{}\times (12t^{14}-424t^{12}-165t^{10}+2724t^8-7586t^6+1500t^4-141t^2-16)^2.
\end{eqnarray*} }%%

These formulas produce infinitely many rational Diophantine sextuples
$\{a,b,c,e,d,f\}$. Moreover, by choosing the rational parameter $t$ from the appropriate interval, we get infinitely many sextuples for each combination of signs. Indeed, for $5.83 < t < 6.86$ all elements are positive, for $t > 6.87$ there is exactly one negative element,
for $1 < t < 1.32$ there are exactly two negative elements, while for $1.33 < t < 2.46$ there are exactly three negative elements in the sextuple. As a specific example, let us take $t=6$, for which we get a sextuple
with all positive elements:
$$
\left \{ \frac{3780}{73}, \frac{26645}{252}, \frac{7}{13140},
\frac{791361752602550684660}{1827893092234556692801}, \right.
$$
$$
\left. \frac{95104852709815809228981184}{351041911654651335633266955},  \frac{3210891270762333567521084544}{21712719223923581005355} \right\}.
$$

%Reduction
\section{More about the construction}
\label{sec4}

\noindent The construction of parametric families of rational Diophantine sextuples in Section \ref{sec3} relies on the fact that the cubic polynomial corresponding to the point $[2]R$ has rational roots. We will show that the same is true for all multiples of $R$. Since the corresponding cubic polynomial has square discriminant (hence it either splits completely over $\mathbb{Q}$ or generates $\mathbb{Z}/3\mathbb{Z}$ Galois extension), it suffices to show that it has at least one rational root.

We will show that $E'$ has a semistable reduction for all odd bad primes, which implies that $E'$ has full rational $2$-torsion (since otherwise the primes dividing the discriminant of $\mathbb{Z}/3\mathbb{Z}$ extension $\mathbb{Q}(E'[2])/\mathbb{Q}$ would be the bad primes of additive reduction).

Let $\sigma_3 \in \mathbb{Q}^\times$ be such that $\sigma_3^2+1$ is a perfect square. Then there is $t\in \mathbb{Q}^\times$ such that $\sigma_3=\frac{t^2-1}{2t}$. We assume throughout the paper that $t \notin  \{-1, 0, 1\}$. Let $P=[x,y]$ with $x \ne 0$ be a rational point on the elliptic curve \eqref{ec:1}. Assume that $x \ne 0$. The corresponding $\sigma_1$ is given by formula
\begin{equation}
\label{eq:0}
\sigma_1 = \frac{-t^4+4t^2-1-x^{-1}(t^2+1)^4}{(t^2-1)t}.
\end{equation}

Consider the curve (here $X$ and $Y$ are variables, not coordinates of the point $P$ on \eqref{ec:1}) $Y^2=X^3+ \sigma_2 X^2 +\sigma_1\sigma_3 X + \sigma_3^2.$ The coordinate transformation $(X,Y) \mapsto (X+1,Y)$ yields
$$
 \qquad Y^2 = X^3+(3+\sigma_2)X^2+(3+2\sigma_2+\sigma_1 \sigma_3) X + (1+\sigma_2+\sigma_1\sigma_3+\sigma_3^2).
$$
By using \eqref{eq:0}, a further change of variables $(X,Y) \mapsto ((\frac{t^2+1}{t})^2X, (\frac{t^2+1}{t})^3 Y)$ yields
\begin{equation}\label{eq:E''}
E'': \qquad Y^2 = X^3+\frac{((t^2+1)^2x^{-1}+1)^2}{4}X^2+\frac{t^2((t^2+1)^2x^{-2}+x^{-1})}{2} X + \frac{t^4x^{-2}}{4}.
\end{equation}
The discriminant $\Delta$ and $c_4$ invariant of $E''$ are equal to
\begin{eqnarray*}
\Delta &=&\displaystyle \frac{t^6y^2}{x^6}, \\
c_4 &=& \displaystyle \frac{((t^2+1)^2x^{-1}+1)(y^2+3x^2t^2)}{x^3}.
\end{eqnarray*}
We will frequently use the fact that if $v_p(\Delta)>0,\;v_p(c_4)=0$ and $E''$ is $p$-integral (i.e. the coefficients of the defining equation are in $\mathbb{Z}_p$), then $E''$ has multiplicative reduction at $p$.
We denote by $v_p$ the standard $p$-adic valuation.

\begin{proposition}
\label{prop:cases}
Assume that $t \in \mathbb{Z}$. Let $p$ be an odd prime.
\begin{itemize}
	\item [a)] If $p|t$ and $x \not \equiv -1 \pmod{p}$, then $E''$ has multiplicative reduction at $p$.
	\item [b)] If $p|t^2+1$ and  $v_p(x) \le 0$, then $E''$ has good or multiplicative reduction at $p$.
	\item [c)] If $p||t^2+1$ and $v_p(x) \ge 4$, $E''$ then has multiplicative reduction at $p$.
\end{itemize}
\end{proposition}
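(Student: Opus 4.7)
The plan is to prove each case by exhibiting a $p$-integral Weierstrass model of $E''$ to which the standard semistability criterion applies: for a $p$-integral model, $v_p(\Delta)>0$ together with $v_p(c_4)=0$ forces multiplicative reduction, while $v_p(\Delta)=0$ forces good reduction. Additive reduction is then ruled out precisely by the persistence of $v_p(c_4)=0$.

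First I would extract $v_p(y)$ from the defining equation \eqref{ec:1}. In case (a), $p\mid t$ degenerates the equation to $y^2 \equiv (x+1)^3 \pmod p$, so $x \not\equiv -1 \pmod p$ yields $v_p(y)=0$. In case (b), $p \mid t^2+1$ gives $y^2 \equiv x^2(x+27) \pmod p$; note $p \neq 3$ since $-1$ is a non-residue modulo $3$. In case (c), the sharp valuation $v_p(t^2+1)=1$ combined with $v_p(x)\ge 4$ leaves $(t^2+1)^6$ as the dominant term of $y^2$, giving $v_p(y^2)=6$ exactly.

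Next I would substitute into $\Delta = t^6y^2/x^6$ and into the cleaner form $c_4\cdot x^4 = ((t^2+1)^2+x)(y^2+3x^2t^2)$. When the model \eqref{eq:E''} is already $p$-integral — case (a) with $v_p(x)=0$ and case (b) with $v_p(x)\le 0$ — substitution gives $v_p(c_4)=0$ directly; case (a) then gives $v_p(\Delta)\ge 6$, while case (b) splits according to whether $x \equiv -27 \pmod p$, yielding either $v_p(\Delta)=0$ (good reduction) or $v_p(\Delta)>0$ with $v_p(c_4)=0$ (multiplicative, since $-27 \not\equiv -24 \pmod p$ because $p\ne 3$). When $v_p(x)\ge 1$ in case (a), or in case (c), the coefficients of \eqref{eq:E''} have negative valuation, so I would apply an admissible change $(X,Y)\mapsto(u^2X,u^3Y)$ with $v_p(u)=-v_p(x)$ (respectively $v_p(u)=2-v_p(x)$) to restore $p$-integrality, and recompute the rescaled invariants $v_p(c_4/u^4)$ and $v_p(\Delta/u^{12})$.

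The main obstacle is the valuation bookkeeping in case (c): the $a_i$ of \eqref{eq:E''} acquire negative $p$-adic valuation at different rates as $v_p(x)$ grows, and one must check that the single scaling $v_p(u)=2-v_p(x)$ simultaneously integralizes all three coefficients while keeping the rescaled $c_4$ a $p$-unit. Here the sharp hypothesis $v_p(t^2+1)=1$ (not merely $\ge 1$) is essential: it fixes $v_p((t^2+1)^6)=6$ exactly, without which the dominant term of $y^2$ — and hence the cancellation producing $v_p(c_4)=0$ — would shift, potentially allowing additive reduction. The analogous split in case (b) between good and multiplicative reduction is a subtler point but is resolved by the elementary observation that $p\neq 3$.
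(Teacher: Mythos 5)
Your proposal follows essentially the same route as the paper: reduce the defining equation mod $p$ to pin down $v_p(y)$, then verify $v_p(c_4)=0$ on a $p$-integral model obtained, where necessary, by an admissible rescaling with $v_p(u)=-v_p(x)$, respectively $v_p(u)=2-v_p(x)$ in part (c) --- the latter being exactly the composition of the paper's two successive changes of variable $(X,Y)\mapsto(X/x^2,Y/x^3)$ and $(X,Y)\mapsto(p^4X,p^6Y)$. The only omission is the subcase $v_p(x)<0$ of part (a), where your congruence $y^2\equiv(x+1)^3\pmod p$ does not yield $v_p(y)=0$; but there the model \eqref{eq:E''} is already $p$-integral with $v_p(\Delta)>0$ and $c_4\equiv y^2/x^3\equiv 1\pmod p$, which is the same one-line argument you already apply to part (b) with $v_p(x)<0$.
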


\begin{proof}
First assume that $p|t$.

\noindent\textbf{Case 1.}  $v_p(x)<0$\\
Simple calculation shows that $c_4 \equiv \frac{y^2}{x^3}\equiv 1 \pmod{p}$. Since the curve $E''$ is $p$-integral with $v_p(\Delta)>0$, $E''$ has multiplicative reduction at $p$.

\noindent\textbf{Case 2.} $v_p(x)>0$\\
The coordinate transformation $(X,Y)\mapsto (X/x^2, Y/x^3)$ yields $p$-integral curve with $c_4=((t^2+1)^2+x)(y^2+3x^2t^2)\equiv y^2 \equiv 1 \pmod{p}$. Hence $E''$ has multiplicative reduction at $p$.

\noindent\textbf{Case 3.} $v_p(x)=0$ and $x \not\equiv -1 \pmod{p}$\\
Since $y^2 \equiv (x+1)^3 \pmod{p}$, we have that $c_4 \equiv (x^{-1}+1)y^2/x^3 \not \equiv 0 \pmod{p}$. Curve $E''$ is $p$-integral with $v_p(\Delta)>0$, hence $E''$ has multiplicative reduction at $p$.

Now, assume that $p|t^2+1$.

\noindent\textbf{Case 1.} $v_p(x)<0$\\
In this case $c_4\equiv \frac{y^2}{x^3}+ 3 \frac{t^2}{x} \pmod{p}$. Since
$y^2/x^3 \equiv 1 \pmod{p}$, it follows $c_4 \equiv 1 \pmod{p}$, hence $E''$ has multiplicative reduction at $p$ (since $E''$ is $p$-integral and $v_p(\Delta)>0$).

\noindent\textbf{Case 2.} $v_p(x)\ge 4$\\
The coordinate transformation $(X,Y)\mapsto (X/x^2, Y/x^3)$ yields $p$-integral model with $v_p(\Delta)>0$ and $c_4 = ((t^2+1)^2+x)(y^2+3x^2t^2)$.

If $p||t^2+1$ and $v_p(x)\ge 4$, then $v_p(y) = 3$, hence $v_p(c_4)=8$ and $v_p(\Delta)\ge 6 + 6 v_p(x)$. Therefore, the change of coordinates $(X,Y) \mapsto (p^4X, p^6Y)$ yields a $p$-integral model with $v_p(c_4)=0$, and $v_p(\Delta)>0$, hence $E''$ has multiplicative reduction at $p$.

\noindent\textbf{Case 3.} $v_p(x)=0$\\
In this case $E''$ is $p$-integral and has bad reduction at $p$ if and only if $v_p(y)>0$. In this case (i.e. if $x \equiv -27 \pmod{p}$), $v_p(c_4)=0$ since $v_p(3x^2t^2)= 0$, hence $E''$ has multiplicative reduction at $p$.
\end{proof}

\begin{proposition}
\label{prop:additive}
Assume that $t \in \mathbb{Z}$ and $v_3(y) \le 0$.
If $E''$ has additive reduction at prime $p$, then $p=2$ or $p|(t^2+1)t$.
\end{proposition}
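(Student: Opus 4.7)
The plan is to prove the contrapositive: if $p$ is an odd prime with $p\nmid t(t^2+1)$, then $E''$ has good or multiplicative reduction at $p$. Writing $f(X)$ for the monic cubic on the right-hand side of \eqref{ec:1}, so that $y^2=f(x)$, the strategy is to split into three cases according to $v_p(x)$ and, in each case, exhibit a $p$-integral model on which either $v_p(\Delta)=0$ (good reduction) or $v_p(\Delta)>0$ together with $v_p(c_4)=0$ (multiplicative reduction).

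When $v_p(x)<0$, the curve equation $y^2=f(x)$ forces $v_p(x)=2k$ and $v_p(y)=3k$ for some integer $k<0$; in the original model \eqref{eq:E''} all coefficients are then $p$-integral, and in the numerator of $c_4$ the term $y^2$ dominates $3t^2x^2$ (since $6k<4k$), so a short bookkeeping yields $v_p(c_4)=0$ while $v_p(\Delta)=-6k>0$. When $v_p(x)>0$, I apply the change of variables $(X,Y)\mapsto(X/x^2,Y/x^3)$ used already in the proof of Proposition \ref{prop:cases}, obtaining a $p$-integral model whose $c_4'$ equals $((t^2+1)^2+x)(y^2+3t^2x^2)$; reducing modulo $p$ this becomes $(t^2+1)^2\cdot f(0)=(t^2+1)^8$, a unit, while $v_p(\Delta')>0$.

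The delicate case is $v_p(x)=0$. Here $E''$ is $p$-integral and $v_p(\Delta)=2v_p(y)$, so good reduction holds precisely when $v_p(y)=0$. If instead $v_p(y)>0$, then $y\equiv 0 \pmod{p}$, and reducing the formula for $c_4$ modulo $p$ kills the $y^2$ term, leaving
\[ c_4 \equiv \frac{3t^2\bigl((t^2+1)^2+x\bigr)}{x^2} \pmod{p}. \]
For $p\ne 3$, the only way this can vanish is $x\equiv -(t^2+1)^2\pmod{p}$, which combined with $y^2=f(x)$ and $y\equiv 0$ would force $f(-(t^2+1)^2)\equiv 0\pmod{p}$. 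The crucial identity (the one genuine computation in the argument) is
\[ f(-(t^2+1)^2)=-27\,t^2(t^2+1)^4, \]
so $p\mid 27$ and hence $p=3$, a contradiction. Thus for $p\ne 3$ we have $v_p(c_4)=0$ and multiplicative reduction; and for $p=3$, the offending subcase $v_3(y)>0$ is precisely what the hypothesis $v_3(y)\le 0$ excludes, leaving only the good-reduction subcase $v_3(y)=0$. The main obstacle I foresee is verifying this cubic identity cleanly; once it is in hand, the rest is routine $p$-adic valuation tracking.
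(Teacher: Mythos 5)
Your proof is correct and follows essentially the same route as the paper's: the same case split on $v_p(x)$ (with the $v_p(x)>0$ case handled by the same rescaling $(X,Y)\mapsto(X/x^2,Y/x^3)$), the same key identity $f(-(t^2+1)^2)=-27t^2(t^2+1)^4$ to rule out $v_p(c_4)>0$ when $v_p(x)=0$ and $v_p(y)>0$, and the same use of the hypothesis $v_3(y)\le 0$ to dispose of $p=3$. No substantive difference.
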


\begin{proof}
Assume that $p\ne 2$ and $p \nmid (t^2+1)t$.

If $v_p(x)>0$, then the change of variables $(X,Y)\mapsto (X/x^2, Y/x^3)$ yields a $p$-integral curve with $v_p(\Delta)=v_p(t^6y^2x^6)>0$. We have that $y^2\equiv (t^2+1)^6 \pmod{p}$, hence $c_4 = ((t^2+1)^2+x)(y^2+3x^2t^2)\equiv (t^2+1)^2(t^2+1)^6 \pmod{p}\not \equiv 0 \pmod{p}$.

If $v_p(y)>0$ (and $v_p(x)\le 0$) then $v_p(x)\ge 0$ (and $p\ne 3$), so the previous model is $p$-integral, and $v_p(\Delta)>0$. If $v_p(c_4)>0$ then $x \equiv -(t^2+1)^2 \pmod{p}$, hence
\begin{align*}
0\equiv y^2&\equiv-(t^2+1)^6+3(t^2-3t+1)(t^2+3t+1)(t^2+1)^4-3(t^2+1)^6+(t^2+1)^6\\
&\equiv (t^2+1)^4(-27t^2) \pmod{p}
\end{align*}
which is a contradiction.
If $v_p(x)<0$, then $E''$ is $p$-integral and $v_p(\Delta)>0$. Since $y^2/x^3 \equiv 1 \pmod{p}$, we have that $c_4 = \displaystyle \frac{((t^2+1)^2x^{-1}+1)(y^2+3x^2t^2)}{x^3}\equiv 1 \pmod{p}$.

If $v_p(y)<0$ then $v_p(x)<0$, so we already proved a multiplicative reduction in this case.

If $v_p(x)=0$ and $v_p(y)=0$, then $E''$ is $p$-integral, and $v_p(\Delta)=0,$ so $E''$ has a good reduction at $p$.
\end{proof}

\begin{proposition}
\label{prop:inertia}
If $E''$ has no rational $2$-torsion, then $E''$ has additive reduction for every odd prime $p|\, disc(\mathbb{Q}(E''[2]))$.
\end{proposition}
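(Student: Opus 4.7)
My plan is to prove the contrapositive and reduce the statement to a claim about the image of the inertia subgroup at $p$ inside $\mathrm{Gal}(\mathbb{Q}(E''[2])/\mathbb{Q})$, then invoke standard facts about how inertia acts on $E''[2]$ at primes of good and multiplicative reduction.

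The first step is to identify the Galois group. From the expression $\Delta = t^6 y^2/x^6 = (t^3 y/x^3)^2$ for the discriminant of $E''$ recorded in Section \ref{sec4}, the monic $2$-division cubic of $E''$ has square discriminant in $\mathbb{Q}$. Together with the hypothesis that $E''$ has no rational $2$-torsion, this $2$-division polynomial is an irreducible cubic with square discriminant, so its splitting field $\mathbb{Q}(E''[2])$ is cyclic of degree $3$ with $\mathrm{Gal}(\mathbb{Q}(E''[2])/\mathbb{Q}) \cong \mathbb{Z}/3\mathbb{Z} \cong A_3 \leq S_3 \cong \mathrm{GL}_2(\mathbb{F}_2)$. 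Since this is a cyclic extension of prime degree, an odd prime $p$ divides its discriminant if and only if $p$ ramifies in it, if and only if the image of $I_p$ in $\mathrm{Gal}(\mathbb{Q}(E''[2])/\mathbb{Q})$ is all of $A_3$.

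It then remains to bound the image of $I_p$ in $\mathrm{GL}_2(\mathbb{F}_2)$ from the reduction type at $p$. If $E''$ has good reduction at the odd prime $p$, the criterion of N\'eron--Ogg--Shafarevich (applied at $\ell = 2 \ne p$) forces $I_p$ to act trivially on $E''[2]$, and we are done. If $E''$ has multiplicative reduction at $p$, then by Grothendieck's semistable reduction theorem, or more concretely via Tate's $p$-adic uniformization $E''(\overline{\mathbb{Q}_p}) \cong \overline{\mathbb{Q}_p}^\times/q^{\mathbb{Z}}$, the inertia $I_p$ acts on the $2$-adic Tate module through a unipotent representation; reducing modulo $2$, the image of $I_p$ in $\mathrm{GL}_2(\mathbb{F}_2)$ consists of unipotent elements and hence has order at most $2$. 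In either case the image of $I_p$ intersects $A_3 \leq S_3$ only in the identity, so $p$ is unramified in $\mathbb{Q}(E''[2])/\mathbb{Q}$ and does not divide its discriminant; taking the contrapositive yields the proposition.

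I do not expect any substantial obstacle; the argument is essentially bookkeeping once the two standard inputs above are in place. The only point deserving care is the application of semistable reduction at $\ell = 2$, but this is transparent from the Tate uniformization itself, which exhibits $E''[2]$ over the maximal unramified extension of $\mathbb{Q}_p$ as $\{\pm 1, \pm q^{1/2}\}$, making the order-dividing-$2$ inertia action manifest.
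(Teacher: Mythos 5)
Your argument is correct and is essentially the paper's proof run in the contrapositive direction: both hinge on the square discriminant forcing $\mathrm{Gal}(\mathbb{Q}(E''[2])/\mathbb{Q})\cong\mathbb{Z}/3\mathbb{Z}$, so that a ramified odd prime makes inertia act on $E''[2]$ by an order-$3$ element with no nonzero fixed vectors, which is incompatible with good or multiplicative reduction. The paper packages the last step via the conductor exponent and $\dim V_2(E'')^I$, while you invoke N\'eron--Ogg--Shafarevich and the Tate curve, but these are the same standard dictionary.
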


\begin{proof}
Since the discriminant of $E''$ is a perfect square, if $E''$ has no rational $2$-torsion (i.e. if $a$, $b$ and $c$ are not rational), then the field generated by the coordinates of $E''[2]$ is $\mathbb{Z}/3\mathbb{Z}$ extension of $\mathbb{Q}$. Define $V_2(E'')=T_2(E'')\otimes \mathbb{Q}_2$, where $T_2(E'')$ is the 2-adic Tate module of $E''$ (we can think of $E''$ as defined over $\mathbb{Q}_p$). Let $I$ be the absolute inertia group of $\mathbb{Q}_p$, and $V_2(E'')^I$ the subspace of $V_2(E'')$ fixed by $I$.
From the definition and the basic properties of the conductor of elliptic curve (see \cite{Sil2}, Chapter 4, \S 10), we know that if $\dim V_2(E'')^I=0$, then the reduction at $p$ is additive (since $p\ne 2$). If $p$ is ramified in $\mathbb{Q}(E''[2])$, then $I$ acts non-trivially on $E''[2]$. Since the ($\mathbb{Z}/3\mathbb{Z}$) action cannot have fixed vectors, we conclude that the reduction at $p$ is additive.
\end{proof}

\begin{corollary}
\label{cor:conclusion}
Let $P=(x,y)$ and $t \in \mathbb{Z}$ be such that $E''$ has good or multiplicative reduction for all $p|t(t^2+1)$ and $v_3(y)\le 0$. Then $E''$ has full rational $2$-torsion.
\end{corollary}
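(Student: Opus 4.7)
The plan is a proof by contradiction that strings together the three preceding propositions. Suppose $E''$ has no rational $2$-torsion. Since $E''$ has square discriminant (the roots of the cubic are either all in $\mathbb{Q}$ or generate a cyclic cubic Galois extension, as noted at the start of Section \ref{sec4}), the field $K=\mathbb{Q}(E''[2])$ is then a nontrivial cyclic cubic extension of $\mathbb{Q}$. My goal is to use the reduction hypotheses to show that $\mathrm{disc}(K/\mathbb{Q})$ cannot be supported on any odd prime, and then to note that it cannot be supported on $2$ either, contradicting the nontriviality of $K/\mathbb{Q}$.

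The first step is to locate a prime $p$ dividing $\mathrm{disc}(K/\mathbb{Q})$. Since $K/\mathbb{Q}$ is nontrivial, such a $p$ exists. I will invoke the classical fact that in a cyclic cubic extension of $\mathbb{Q}$ the prime $2$ is unramified (equivalently, its conductor, and hence its discriminant, is odd); this follows for instance from class field theory, because $(\mathbb{Z}/2^k\mathbb{Z})^\times$ has $2$-power order and therefore admits no surjection onto $\mathbb{Z}/3\mathbb{Z}$. Thus any prime ramifying in $K$ is odd, so we may take $p$ to be an odd prime dividing $\mathrm{disc}(K/\mathbb{Q})$.

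Now Proposition \ref{prop:inertia} applies directly and tells us that $E''$ has additive reduction at this odd prime $p$. On the other hand, the hypothesis $v_3(y)\le 0$ together with $t\in\mathbb{Z}$ puts us exactly in the setting of Proposition \ref{prop:additive}, which forces $p=2$ or $p\mid t(t^2+1)$. Since $p$ is odd, we must have $p\mid t(t^2+1)$. But the hypothesis of the corollary asserts that $E''$ has good or multiplicative reduction at every such prime, contradicting the additive reduction just derived. Hence the assumption that $E''$ has no rational $2$-torsion is untenable, and $E''[2]\subset E''(\mathbb{Q})$.

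The main conceptual point (and the only step that is not a routine invocation of the previous propositions) is the input about cyclic cubic extensions of $\mathbb{Q}$ having odd discriminant. Everything else is bookkeeping: checking that the hypotheses of Propositions \ref{prop:additive} and \ref{prop:inertia} are met, and verifying that the conclusions line up to give a contradiction. I expect this to be the shape the authors will use.
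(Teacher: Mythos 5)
Your proof is correct and follows essentially the same route as the paper: assume no rational $2$-torsion, combine Propositions \ref{prop:additive} and \ref{prop:inertia} to conclude that $\mathbb{Q}(E''[2])/\mathbb{Q}$ would be a cyclic cubic extension unramified outside $2$, and rule that out. The only difference is cosmetic: the paper dismisses such an extension by citing the Jones--Roberts database of number fields, whereas you give a short self-contained class field theory argument (no surjection from $(\mathbb{Z}/2^k\mathbb{Z})^\times$ onto $\mathbb{Z}/3\mathbb{Z}$, plus Minkowski), which is a perfectly good substitute.
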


\begin{proof}
Assume that $E''$ does not have rational $2$-torsion. Then Proposition \ref{prop:additive} and Proposition \ref{prop:inertia} imply that $\mathbb{Q}(E''[2])/\mathbb{Q}$ is a cubic $\mathbb{Z}/3\mathbb{Z}$ extension unramified outside $2$. Such extension does not exist (e.g. see John Jones' database of number fields \cite{J-R}).
\end{proof}

%R multiples
\section{Multiples of $R$}
\label{sec5}
\noindent Let $p$ be an odd prime and $t \ne 1$ a positive integer such that $p|| t^2+1$ (in particular, $p>3$). Consider elliptic curve \eqref{ec:1}
$$
E:\qquad y^2=x^3+3(t^2-3t+1)(t^2+3t+1)x^2+3(t^2+1)^4x+(t^2+1)^6,
$$
and point $R=[0,(t^2+1)^3]$ on it. In this section we will prove that the elliptic curve $E''$ that corresponds to the multiples of $R$ has full rational $2$-torsion (i.e. $a,b$ and $c$ are rational). We begin by describing $p$-adic valuation of the coordinates of the multiples $[n]R$.

Denote by $P \mapsto \tilde{P}$, the reduction mod $p$ map, $E(\mathbb{Q}_p) \rightarrow \tilde{E}(\mathbb{F}_p)$, and by $\tilde{E}_{ns}(\mathbb{F}_p)$ the set of nonsingular points in $\tilde{E}(\mathbb{F}_p)$ (which form a group). Let  $E_0(\mathbb{Q}_p)=\{P\in E(\mathbb{Q}_p): \tilde{P} \in \tilde{E}_{ns}(\mathbb{F}_p) \}$ and $E_1(\mathbb{Q}_p)=\{P \in E(\mathbb{Q}_p): \tilde{P} = \tilde{\mathcal{O}}\}$. We know (see e.g. \cite[Chapter 7]{Sil}) that $E_1(\mathbb{Q}_p) \simeq \widehat{E}(p\mathbb{Z}_p)$, where $\widehat{E}/\mathbb{Z}_p$ is the formal group associated to $E$. More precisely, this isomorphism is given by
\begin{align*}
\widehat{E}(p\mathbb{Z}_p) &\rightarrow E_1(\mathbb{Q}_p)\\
z &\mapsto \left(\frac{z}{w(z)}, -\frac{1}{w(z)}\right),
\end{align*}
where $w(z)=z^3(1+\ldots)\in \mathbb{Z}_p[[z]]$ is the formal power series that (formally) relates functions $z=-\frac{x}{y}$ and $y=-\frac{1}{y}$ on $E$ (see \cite[Chapter 4]{Sil}). Moreover, the multiplication by $[p]$ map on $\widehat{E}(\mathbb{Z}_p)$ is an isomorphism, and satisfies the following identity (since $a_1=0$):
$$
[p]z=pz+O(z^3).
$$
Note that $\tilde{R}$ is a singular point, hence $R\not \in E_0(\mathbb{Q}_p)$, but in general we have that $E(\mathbb{Q}_p)/E_0(\mathbb{Q}_p)$ is a finite group. Next we will show that $R$ has order two in this group.

\begin{lemma} \label{lem:234}
The following is valid:
\begin{enumerate}
	\item [a)] $v_p(x([2]R))=0,$
	\item [b)] $v_p(x([3]R))=4,$
	\item [c)] $v_p(x([4]R))=-2$ and $v_p(y([4]R))=-3$.
\end{enumerate}
In particular, $[2]R \in E_0(\mathbb{Q}_p)$ and $[4]R\in E_1(\mathbb{Q}_p)$.
\end{lemma}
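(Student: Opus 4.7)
The plan is to compute $[2]R$, $[3]R$, $[4]R$ explicitly via the chord–tangent law and read off the $p$-adic valuations of the resulting coordinates. Throughout, abbreviate $u=t^2+1$, so the curve reads
$$
E:\quad y^2=f(x)=x^3+3(u^2-9t^2)x^2+3u^4x+u^6,
$$
the hypothesis $p\|t^2+1$ becomes $v_p(u)=1$, and $t^2\equiv -1\pmod p$; note that $p\geq 5$, since $-1$ is not a square modulo $3$. All three parts then reduce to careful bookkeeping with the group law.

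For (a), the tangent slope at $R=(0,u^3)$ is $\lambda=f'(0)/(2u^3)=3u/2$, yielding
$$
x([2]R)=\lambda^2-3(u^2-9t^2)=\frac{3(36t^2-u^2)}{4},\qquad y([2]R)=-\lambda\,x([2]R)-u^3=\frac{u(u^2-324t^2)}{8}.
$$
Reducing mod $p$ gives $x([2]R)\equiv -27$, so $v_p(x([2]R))=0$; and since the special fibre $y^2\equiv x^2(x+27)$ is singular only at $(0,0)$, the reduction $(-27,0)$ is nonsingular and $[2]R\in E_0(\mathbb Q_p)$.

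For (b), I compute $[3]R=R+[2]R$ with chord slope
$$
\lambda=\frac{y([2]R)-u^3}{x([2]R)}=-\frac{u(7u^2+324t^2)}{6(36t^2-u^2)},
$$
so $v_p(\lambda)=1$. The key algebraic input is the cancellation $-3(u^2-9t^2)-x([2]R)=-9u^2/4$, which collapses $x([3]R)$ to $\lambda^2-9u^2/4$. Combining over a common denominator and applying the difference of squares
$$
(7u^2+324t^2)^2-81(36t^2-u^2)^2=32u^2(324t^2-u^2)
$$
gives
$$
x([3]R)=\frac{8u^4(324t^2-u^2)}{9(36t^2-u^2)^2}.
$$
Since $324t^2-u^2\equiv -324$ and $36t^2-u^2\equiv -36$ are both $p$-adic units, we conclude $v_p(x([3]R))=v_p(u^4)=4$.

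For (c), apply duplication to $[2]R$. Direct reduction mod $p$ shows $f'(x([2]R))\equiv 3(-27)^2+54(-27)=729\pmod p$, a unit at $p\geq 5$, so the tangent slope $\lambda_4=f'(x([2]R))/(2\,y([2]R))$ satisfies $v_p(\lambda_4)=-v_p(y([2]R))=-1$. Both $3(u^2-9t^2)$ and $2x([2]R)$ being $p$-integral, the duplication formula $x([4]R)=\lambda_4^2-3(u^2-9t^2)-2x([2]R)$ yields $v_p(x([4]R))=-2$, and from $y([4]R)=\lambda_4(x([2]R)-x([4]R))-y([2]R)$ the first summand has valuation $(-1)+(-2)=-3$ while $y([2]R)$ is $p$-integral, giving $v_p(y([4]R))=-3$ and $[4]R\in E_1(\mathbb Q_p)$. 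The only genuine subtlety is (b): the naive bound $v_p(\lambda^2)\geq 2$ is not sharp, and only after spotting the cancellation to $-9u^2/4$ and the clean factorisation of the ensuing difference of squares does the precise power $u^4$ emerge.
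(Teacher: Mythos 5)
Your proof is correct and follows essentially the same route as the paper: both compute $[2]R$, $[3]R$, $[4]R$ explicitly via the group law and read off the $p$-adic valuations (your expressions for $x([2]R)$ and $x([3]R)$ agree with the paper's after the factorisations $u^2-36t^2=(t^2-6t+1)(t^2+6t+1)$ and $u^2-324t^2=(t^2-18t+1)(t^2+18t+1)$). The only cosmetic difference is in part~(c), where the paper writes out $x([4]R)$ in closed form and evaluates it mod $p$, whereas you bypass the explicit formula by a valuation count on the duplication formula applied to $[2]R$; both yield the same conclusion.
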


\begin{proof}
Explicit calculation shows
\begin{align*}
x([2]R) &=-\frac{3}{4}(t^2-6t+1)(t^2+6t+1)\\
x([3]R) &=-\frac{8}{9}(t^2+1)^4(t^2-18t+1)(t^2+18t+1)(t^2-6t+1)^{-2}(t^2+6t+1)^{-2}\\
x([4]R) &=\frac{15}{16}(t^2+1)^{-2}(t^2-18t+1)^{-2}(t^2+18t+1)^{-2}(t^2-6t+1)(t^2+6t+1)\\
&\mbox{}\times (t^4 - 36t^3 - 106t^2 - 36t + 1)(t^4 + \frac{118}{5}t^2 + 1)(t^4 + 36t^3 - 106t^2 + 36t + 1).
\end{align*}
Parts a) and b) follow immediately. For c) note that
$$(t^2+1)^2x([4]R) \equiv \frac{3}{16}(-18\cdot 18)^{-2}(-6\cdot 6)(108)(-108)(108)\not \equiv 0 \pmod {p}.$$
The second claim in c) follows from the cubic equation $y^2=x^3+\ldots$.
\end{proof}

\begin{lemma}
Let $n,m \in \mathbb{N}$. If $p\nmid m$ then
$$v_p(x([m\cdot p^n]([4]R)))=-2n-2.$$
\end{lemma}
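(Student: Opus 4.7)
The plan is to transport the question to the formal group $\widehat{E}$ via the isomorphism $\widehat{E}(p\mathbb{Z}_p) \simeq E_1(\mathbb{Q}_p)$ set up just before the lemma, track the $p$-adic valuation of the formal-group parameter $z = -x/y$ under the multiplication-by-$mp^n$ map, and finally translate back using the relation $v_p(x) = -2v_p(z)$ valid on $E_1$. Lemma \ref{lem:234}(c) gives $v_p(x([4]R)) = -2$ and $v_p(y([4]R)) = -3$, so the parameter $z_0 := z([4]R)$ satisfies $v_p(z_0) = 1$, confirming that $[4]R \in E_1(\mathbb{Q}_p)$ and providing the base case of the induction.

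The core step is to iterate $[p]$ on $\widehat{E}$. Using the identity $[p]z = pz + O(z^3)$ (valid since $a_1 = 0$), if $v_p(z) = k \geq 1$ then $v_p(pz) = k+1$, while every term of the remainder has valuation at least $3k$ because its coefficients lie in $\mathbb{Z}_p$; since $k+1 < 3k$ for $k \geq 1$, the leading term dominates and $v_p([p]z) = k+1$. Starting from $v_p(z_0) = 1$, induction on $n$ yields $v_p([p^n]z_0) = n+1$. The factor $[m]$ is handled analogously: since $[m]z = mz + O(z^2)$ with $v_p(m) = 0$, multiplication by $m$ preserves $v_p$ on $\widehat{E}(p\mathbb{Z}_p)$ (here the leading term has valuation $k$ and the tail at least $2k > k$), so $v_p([mp^n]z_0) = n+1$. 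Translating back through $x = z/w(z)$ with $w(z) \in z^3(1 + z\mathbb{Z}_p[[z]])$ gives $v_p(x([mp^n][4]R)) = -2(n+1) = -2n-2$, as required.

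The main obstacle is bookkeeping at the inductive step: one must confirm that the coefficients of $[p]z - pz$ really lie in $\mathbb{Z}_p$ (so no hidden negative valuations sneak in) and that the strict inequality $k+1 < 3k$ persists, so the $pz$ term genuinely dominates. This is why the lemma is phrased in terms of multiples of $[4]R$ rather than $R$ itself: by Lemma \ref{lem:234}, $[4]R$ is the smallest positive multiple of $R$ that lies in $E_1(\mathbb{Q}_p)$, and it places us exactly at $k = 1$ where the induction can start. Once these integrality and dominance checks are in place, the rest of the proof is a routine application of the formal-group dictionary.
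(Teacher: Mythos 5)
Your argument is correct and follows essentially the same route as the paper: pass to the formal group via $z=-x/y$ with $v_p(z)=1$, use $[p]z=pz+O(z^3)$ (and that $p\nmid m$ preserves the valuation) to get $v_p([mp^n]z)=n+1$, and translate back through $x=z/w(z)$ to obtain $v_p(x)=-2(n+1)$. Your extra care with the dominance inequality $k+1<3k$ (which is exactly why the $a_1=0$ refinement $O(z^3)$ is needed at $k=1$) is a welcome elaboration of a step the paper leaves implicit.
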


\begin{proof}
Since $[4]R \in E_1(\mathbb{Q}_p)$, then $z=-\frac{x(4R)}{y(4R)} \in \widehat{E}(p\mathbb{Z}_p)$ with $v_p(z)=1$. It follows from the multiplication by $[k]$ formula that if $z\in p\mathbb{Z}_p$, then $v_p([m]z)=v_p(z)$ and $v_p([p]z)=v_p(z)+1$, hence $v_p([p^n][m]z)=n+1$. Since $x([p^n][m](4R))=\frac{[m\cdot p^n]z}{w([m\cdot p^n]z)}=\frac{1}{([m\cdot p^n]z)^2(1+\cdots)}$, the claim follows.
\end{proof}

\begin{lemma}
\label{lemma:valuations}
For $m \in \mathbb{N}$ the following applies:
\begin{enumerate}
	\item [a)] $v_p(x(R+[m]([4]R)))=4+v_p(m)$
	\item [b)] $v_p(x([2]R+[m]([4]R)))=0,$
	\item [c)] $v_p(x([3]R+[m]([4]R)))=4+v_p(m+1)$.
	\item [d)] $v_3(x([m]([3]R))) < 0$,
	\item [e)] $v_3(x(R+[m]([3]R))) > 0,$
	\item [f)]$v_3(x([2]R+[m]([3]R))) > 0.$
\end{enumerate}
\end{lemma}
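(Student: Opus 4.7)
Our plan is to prove all six parts by direct computation with the addition law on $E$, tracking valuations at the relevant prime. Throughout we let $k=v_p(m)$ in (a)--(c) (with $p \mathbin{||} t^2+1$) and $k=v_3(m)$ in (d)--(f). For (a)--(c) we set $Q=[m][4]R$; the previous lemma gives $v_p(x(Q))=-2(1+k)$ and $v_p(y(Q))=-3(1+k)$. The key observation for (a) is the identity $y(R)^2=(t^2+1)^6=C$: the chord through $R$ and $Q$ meets $E$ at $R$, $Q$ and $-(R+Q)$, yielding a cubic in $x$ with $x=0$ as a root. Vieta's formulas then give
\[ x(R+Q)\cdot x(Q)^2 \;=\; B\,x(Q)+2C-2\,y(R)\,y(Q). \]
Using $v_p(B)=4$, $v_p(C)=6$, $v_p(y(R))=3$, the summand $-2y(R)y(Q)$ has valuation $-3k$, strictly below that of $Bx(Q)$ and $2C$, so $v_p(x(R+Q))=-3k-(-4-4k)=4+k$, which is (a). For (c), the identity $[3]R+[m][4]R=[m+1][4]R-R$ reduces the problem to the same computation with $Q'=[m+1][4]R$ and the point $-R=(0,-y(R))$; the dominant term is now $2y(R)y(Q')$ with valuation $-3v_p(m+1)$, giving $v_p(x([3]R+[m][4]R))=4+v_p(m+1)$.

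For (b) we invoke the general symmetric addition formula
\[ x(P+Q)\bigl(x(P)-x(Q)\bigr)^2 = -2y(P)y(Q)+2A\,x(P)x(Q)+B\bigl(x(P)+x(Q)\bigr)+2C+x(P)x(Q)\bigl(x(P)+x(Q)\bigr) \]
with $P=[2]R$. A brief computation, using $x([2]R)+A=\tfrac{9(t^2+1)^2}{4}$ inside $y([2]R)^2=x([2]R)^2\bigl(x([2]R)+A\bigr)+B\,x([2]R)+C$, gives $v_p(x([2]R))=0$ and $v_p(y([2]R))=1$. The summand $x(P)x(Q)(x(P)+x(Q))$ uniquely attains the minimum valuation $-4-4k$ on the right, and writing $(x(P)-x(Q))^2=x(Q)^2(1-x(P)/x(Q))^2$ and $x(P)x(Q)(x(P)+x(Q))=x(P)x(Q)^2(1+x(P)/x(Q))$ yields $x([2]R+Q)\equiv x([2]R)\equiv -27\not\equiv 0\pmod p$, which is (b).

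Parts (d)--(f) proceed by the same strategy at $p=3$. Since $t^2+1\not\equiv 0\pmod 3$ for every integer $t$, all factors in the formula for $x([3]R)$ from Lemma~\ref{lem:234} are $3$-adic units, so $v_3(x([3]R))=-2$ and hence $v_3(y([3]R))=-3$. Therefore $[3]R\in E_1(\mathbb{Q}_3)$, and the formal-group multiplication law delivers $v_3(x([m][3]R))=-2(1+v_3(m))<0$, proving (d). Repeating the argument of (a) at the prime $3$ with $Q=[m][3]R$ yields $v_3(x(R+Q))=1+v_3(m)>0$, which is (e). Finally, transferring the argument of (b) to $p=3$ (using $v_3(x([2]R))=1$ and $v_3(y([2]R))=0$) shows that $x_1x_2(x_1+x_2)$ uniquely attains the minimum valuation $-3-4v_3(m)$ when $v_3(m)\ge 1$ and ties with $-2y_1y_2$ when $v_3(m)=0$; in either case division by $(x_1-x_2)^2$ forces $v_3(x([2]R+[m][3]R))\ge 1>0$, which is (f).

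The main difficulty is simply valuation bookkeeping: in each part one must identify the dominant term(s) on the right-hand side of the addition formula and either verify uniqueness of the minimum valuation (ruling out cancellation) or check that any accidental cancellation can only raise the valuation beyond the threshold the statement requires. The formal-group input provided by the preceding two lemmas makes this bookkeeping essentially mechanical.
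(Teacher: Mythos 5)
Your proof is correct. For parts (a) and (c) it is essentially the paper's own argument: the same chord computation giving $x(R+Q)\,x(Q)^2 = 3(t^2+1)^4x(Q)+2(t^2+1)^6-2y(R)y(Q)$ (the paper phrases it as $\lambda^2-a_2-x_2$ and clears denominators), the same identification of $-2y(R)y(Q)$ as the term of strictly minimal valuation, and the same reduction of (c) to (a) via $[3]R+[m][4]R=-R+[m+1][4]R$. Where you genuinely diverge is in (b) and (d)--(f). The paper handles these structurally: $[m][4]R$ lies in $E_1(\mathbb{Q}_p)$, the kernel of the reduction homomorphism $E_0(\mathbb{Q}_p)\to\tilde{E}_{ns}(\mathbb{F}_p)$, so $[2]R+[m][4]R$ reduces to $\widetilde{[2]R}$, whose $x$-coordinate is the unit $-27$; and at $p=3$ the group $\tilde{E}_{ns}(\mathbb{F}_3)$ is cyclic of order $3$ generated by $\tilde{R}=(0,\pm 1)$, so the points in (d)--(f) reduce to $\tilde{\mathcal{O}}$, $\tilde{R}$ and $-\tilde{R}$ respectively, which immediately yields the three sign conditions. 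You instead push the explicit addition formula through in every case. This costs extra bookkeeping: you must separately establish $v_p(y([2]R))=1$, $v_3(x([2]R))=1$, $v_3(y([2]R))=0$ and $v_3(x([3]R))=-2$ (all of which check out, e.g.\ via the factorization $y([2]R)^2=(t^2+1)^2\bigl(\tfrac32 x([2]R)+(t^2+1)^2\bigr)^2$ and the fact that $t^2\pm 6t+1$, $t^2\pm 18t+1$ are units at the relevant primes), and you must deal with the tie of minimal valuations in (f) when $3\nmid m$ --- which you resolve correctly, since cancellation can only raise the valuation of the right-hand side and only a lower bound is needed. In exchange, your computation yields sharper conclusions in (d) and (e), namely the exact valuations $-2(1+v_3(m))$ and $1+v_3(m)$ rather than mere inequalities. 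Both routes are sound; the paper's appeal to the reduction homomorphism is shorter and less error-prone, while yours is more uniform and self-contained.
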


\begin{proof}
Let $[x_1,y_1]$ and $[x_2, y_2]$ be two different points on $E$, and set %du
$[x_3,y_3]=[x_1,y_1]+[x_2,y_2]$. Then
$$x_3 = \lambda^2-a_2-x_1-x_2,$$
where $\lambda=\frac{y_2-y_1}{x_2-x_1}$, and $a_2= 3(t^2-3t+1)(t^2+3t+1)$.
\\
a) If $[x_1, y_1]=R=[0, (t^2+1)^3]$ and $[x_2,y_2]=[m]([4]R)$, then
\begin{align*}
x_3&=\lambda^2-a_2-x_2=\frac{(y_2-(t^2+1)^3)^2-a_2x_2^2-x_2^3}{x_2^2}\\
   &= \frac{(t^2+1)^4x_2+2(t^2+1)^6-2y_2(t^2+1)^3}{x_2^2}.
\end{align*}
Since $v_p(x_2)=-2(v_p(m)+1)$, it follows $v_p(\frac{(t^2+1)^4}{x_2})=6+2v_p(m)$, $v_p(\frac{2(t^2+1)^6}{x_2^2})=10+4v_p(m)$ and $v_p(\frac{-2y_2(t^2+1)^3}{x_2^2})=7+4v_p(m)-3(v_p(m)+1)=4+v_p(m)$, hence $v_p(x_3)=4+v_p(m)$.
\\
b) Since $[4]R$ reduces to $\tilde{\mathcal{O}} \in \tilde{E}_{ns}(\mathbb{F}_p)$, it follows that $\widetilde{([2]R+[4]R)}=\widetilde{[2]R}$ (note that $[2]R\in E_0(\mathbb{Q}_p)$), the claim follows from Lemma \ref{lem:234}.
\\
c) Since $[3]R+[m][4]R=-R+[m+1][4]R$ and $-R=[0,-(t^2+1)^3]$, the claim follows from the same argument as in a).
\\
d), e) and f) Notice that $\tilde{R}$ is a non-singular point (with $x(\tilde{R})=0$) in the order $3$ cyclic group $\tilde{E}_{ns}(\mathbb{F}_3)$ (the other two points are $\tilde{\mathcal{O}}$ and $(-\tilde{R})$). It follows $\widetilde{[m][3]R}=\tilde{\mathcal{O}}$,  $\widetilde{R+[m][3]R}=\tilde{R}$ and  $\widetilde{[2]R+[m][3]R}=-\tilde{R}$, hence $v_3(x([m][3]R)<0$, $v_3(x(R+[m][3]R)) > 0$, and $v_3(x([2]R+[m][3]R)) > 0$.
\end{proof}

\begin{lemma}
\label{lem:t}
Let $q|t$ be a prime and $m$ a positive integer. Then $x([m]R)\not \equiv -1 \pmod{q}$.
\end{lemma}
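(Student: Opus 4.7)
The plan is to reduce $E$ modulo $q$, show that its reduction is cuspidal with singular point $(-1,0)$, verify that $R$ reduces to a smooth point, and then invoke the fact that $E_0(\mathbb{Q}_q)$ is a subgroup of $E(\mathbb{Q}_q)$ to conclude that no multiple of $R$ can reduce to the singular locus.

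Concretely, since $q \mid t$, the coefficients $3(t^2-3t+1)(t^2+3t+1)$, $3(t^2+1)^4$, and $(t^2+1)^6$ of $E$ reduce modulo $q$ to $3$, $3$, and $1$ respectively, so the reduction becomes
$$\tilde E:\qquad Y^2 = X^3 + 3X^2 + 3X + 1 = (X+1)^3$$
uniformly in $q$ (the identity still holds in characteristic $3$ because $(X+1)^3 = X^3 + 1$ there). Thus $\tilde E$ is cuspidal with unique singular point $(-1,0)$, and its nonsingular locus $\tilde E_{ns}(\mathbb{F}_q)$ is a group. The point $R = (0,(t^2+1)^3)$ reduces to $(0,1)$, which lies in the smooth locus, so $R \in E_0(\mathbb{Q}_q)$. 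Since $E_0(\mathbb{Q}_q)$ is a subgroup of $E(\mathbb{Q}_q)$, every multiple $[m]R$ remains in $E_0(\mathbb{Q}_q)$.

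To finish, write $[m]R = (x_m, y_m)$. If $v_q(x_m) < 0$, then $v_q(x_m + 1) = v_q(x_m) < 0$, so $x_m \not\equiv -1 \pmod q$ trivially. Otherwise $(x_m, y_m)$ has integral coordinates at $q$ and reduces into $\tilde E_{ns}(\mathbb{F}_q)$; were $x_m \equiv -1 \pmod q$, the Weierstrass equation $y_m^2 = (x_m+1)^3$ mod $q$ would force $y_m \equiv 0 \pmod q$, making $\widetilde{[m]R}$ equal to the singular point $(-1,0)$, contradicting $[m]R \in E_0(\mathbb{Q}_q)$.

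The only step requiring any care is confirming uniformly in $q$ (including $q = 2$ and $q = 3$) that the reduction really is cuspidal of the claimed form and that $R$ lands in the smooth locus; both are immediate by substitution. Beyond that, the argument is entirely standard, so no serious obstacle is anticipated.
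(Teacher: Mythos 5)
Your proposal is correct and follows essentially the same route as the paper: reduce mod $q$ to the cuspidal curve $Y^2=(X+1)^3$, note that $R$ lands in the smooth locus so $[m]R\in E_0(\mathbb{Q}_q)$, and observe that $x\equiv -1\pmod q$ would force reduction to the unique singular point $(-1,0)$. You merely spell out the details (the explicit reduced equation, the treatment of $v_q(x_m)<0$, and the small primes $q=2,3$) that the paper's three-line proof leaves implicit.
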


\begin{proof}
Note that mod $q$ reduction of point $R$ is a non-singular point, i.e. $R\in E_0(\mathbb{Q}_q)$. Hence $[m]R\in E_0(\mathbb{Q}_q)$ also has a non-singular reduction mod $q$. Since the only singular point in $\tilde{E}(\mathbb{F}_q)$ is the point $[-1,0]$, we conclude that $x([m]R)\not \equiv -1 \pmod{q}$.
\end{proof}

\begin{corollary}
\label{cor:integers}
Let  $t \ne 1$ be a positive integer such that the number $t^2+1$ is squarefree.
The elliptic curve $E''$ that corresponds to any multiple  $[m]R$, where $m>1$, has full rational $2$-torsion (i.e. the corresponding $a,b$ and $c$ are rational).
\end{corollary}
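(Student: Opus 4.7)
The plan is to verify, for every integer $m>1$, the two hypotheses of Corollary \ref{cor:conclusion} applied to the point $[m]R=(x,y)$ on $E$: that $E''$ has good or multiplicative reduction at every odd prime $p\mid t(t^2+1)$, and that $v_3(y)\le 0$. The primes $q\mid t$ are immediate: Lemma \ref{lem:t} gives $x([m]R)\not\equiv -1\pmod q$, and Proposition \ref{prop:cases}(a) then yields multiplicative reduction at $q$.

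For a prime $p\mid t^2+1$, the squarefree hypothesis forces $p\|\,t^2+1$, so by Proposition \ref{prop:cases}(b)--(c) it is enough to show that $v_p(x([m]R))\le 0$ or $v_p(x([m]R))\ge 4$. I would split on $m\bmod 4$. When $m=4q$ with $q\ge 1$, writing $q=m'p^n$ with $p\nmid m'$ and applying the lemma immediately following Lemma \ref{lem:234} gives $v_p(x([m]R))=-2n-2\le -2$. When $m=4q+1$ with $q\ge 1$, Lemma \ref{lemma:valuations}(a) gives $v_p(x([m]R))=4+v_p(q)\ge 4$. When $m=4q+2$, Lemma \ref{lemma:valuations}(b), together with Lemma \ref{lem:234}(a) in the edge case $q=0$, gives $v_p(x([m]R))=0$. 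When $m=4q+3$, Lemma \ref{lemma:valuations}(c), together with Lemma \ref{lem:234}(b) for $q=0$, gives $v_p(x([m]R))=4+v_p(q+1)\ge 4$. The required dichotomy thus holds in every residue class, and these classes exhaust all $m\ge 2$.

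For the second hypothesis, the key observation is that $-1$ is not a square modulo $3$, whence $3\nmid t^2+1$ and in particular $v_3((t^2+1)^6)=0$. Splitting on $m\bmod 3$: if $3\mid m$, Lemma \ref{lemma:valuations}(d) gives $v_3(x([m]R))<0$, and then the defining equation $y^2=x^3+\cdots$ forces $v_3(y^2)=3v_3(x)<0$, so $v_3(y)<0$; if $3\nmid m$, parts (e) and (f) give $v_3(x([m]R))>0$, and every term of the cubic except $(t^2+1)^6$ has positive $3$-adic valuation, so $v_3(y^2)=0$ and $v_3(y)=0$. In either case $v_3(y)\le 0$, completing the verification of the hypotheses of Corollary \ref{cor:conclusion}.

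The main obstacle is purely the bookkeeping in the second paragraph: keeping the residue classes modulo $4$ straight and ensuring the boundary values $m\in\{2,3,4\}$ are correctly absorbed by Lemma \ref{lem:234}. No new conceptual input is needed beyond the valuation computations already assembled in the section; once the case analysis is complete, Corollary \ref{cor:conclusion} delivers the full rational $2$-torsion of $E''$, which is precisely the assertion that the associated $a$, $b$, $c$ are rational.
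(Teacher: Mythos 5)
Your proposal is correct and follows essentially the same route as the paper's proof: reduce to the hypotheses of Corollary \ref{cor:conclusion}, handle primes dividing $t$ via Lemma \ref{lem:t} and Proposition \ref{prop:cases}(a), handle primes dividing the squarefree $t^2+1$ via the valuation lemmas, and check $v_3(y)\le 0$ from parts d)--f). The paper merely states that ``Lemma \ref{lemma:valuations} together with Proposition \ref{prop:cases}'' gives the reduction types, whereas you have written out the residue-class bookkeeping modulo $4$ and modulo $3$ explicitly; the content is the same.
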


\begin{proof}
Let $p$ be an odd prime divisor of $t^2+1$. Then $p||t^2+1$. Lemma \ref{lemma:valuations} together with the Proposition \ref{prop:cases} implies that $E''$ has good or multiplicative reduction at $p$. If $p|t$ then by Lemma \ref{lem:t} we have $x([m]R)\not \equiv -1 \pmod{p}$, and part a) of Proposition \ref{prop:cases} implies that $E''$ has multiplicative reduction at $p$. Since Lemma \ref{lemma:valuations} implies $v_3(y([m]R))\le 0$  (note that $3\nmid t^2+1$), it follows from Corollary \ref{cor:conclusion} that $E''$ has rational $2$-torsion.
\end{proof}

To prove a rationality of $2$-torsion of $E''$ that corresponds to the multiple of $R$ for arbitrary rational $t$, we will need an effective version of Hilbert's Irreducibility Theorem. The following theorem was proved by D\"orge \cite{Do}.

\begin{theorem}
\label{thm:Hilbert}
If $f(X,t)$ is an irreducible polynomial with integral coefficients and if $R(N)$ is the number of integers $\tau$ such that $|\tau|<N$ and $f(x,\tau)$ is reducible, then $R(N)\le C N^{1-\alpha}$ where $\alpha$ and $C$ are certain positive constants.
\end{theorem}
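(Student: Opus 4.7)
The plan is to reduce reducibility of $f(X,\tau)$ at integer $\tau$ to a Diophantine counting problem for auxiliary polynomials. Without loss of generality $f(X,t)\in\mathbb{Z}[X,t]$ has positive degree $n\ge 2$ in $X$; after excluding the finitely many $\tau$ for which the leading coefficient (a polynomial in $t$) vanishes, we may treat $f(X,\tau)$ as honestly of degree $n$. By Gauss's lemma, if $f(X,\tau)$ is reducible over $\mathbb{Q}$ then it admits a factor $g(X)\in\mathbb{Z}[X]$ of some degree $d\in\{1,\dots,n-1\}$, whose coefficients are, up to sign, the elementary symmetric functions of a size-$d$ subset $S$ of the roots $x_1(\tau),\dots,x_n(\tau)$. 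Since there are only finitely many choices for $d$ and $|S|$, it suffices to bound, for each fixed $S$, the count of $\tau$ giving rise to a factorization of type $S$.

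View the $x_i(t)$ as Puiseux series in $t^{-1/e}$ near infinity, and form, for each proper nonempty $S\subset\{1,\dots,n\}$ and each $k\in\{1,\dots,|S|\}$, the symmetric function $\phi_{S,k}(t)=e_k\bigl(x_i(t):i\in S\bigr)$. Since $f$ is irreducible over $\mathbb{Q}(t)$, the Galois group $G=\mathrm{Gal}(K/\mathbb{Q}(t))$ of a splitting field $K$ acts transitively on the roots, so no proper $S$ is $G$-stable; hence each $\phi_{S,k}$ is algebraic but \emph{not} rational over $\mathbb{Q}(t)$, and satisfies an irreducible polynomial relation $h_{S,k}(t,s)=0$ with $h_{S,k}\in\mathbb{Z}[t,s]$ of degree $\ge 2$ in $s$. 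If $f(X,\tau)$ factors with a $d$-subset $S$ producing a rational factor, then $h_{S,k}(\tau,s)=0$ necessarily admits a rational solution $s$ for every $k$.

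The key step is to bound, for each such $h_{S,k}$, the number of $\tau\in\mathbb{Z}\cap(-N,N)$ at which $h_{S,k}(\tau,s)=0$ has a rational solution. D\"orge's method expands the branches of $h_{S,k}(t,s)=0$ as Puiseux series in $t^{-1/q}$ at infinity; a rational specialization $s\in\mathbb{Q}$ then satisfies size constraints together with a congruence extracted by truncating the expansion, confining $\tau$ to a thin set of cardinality $O(N^{1-\alpha_{S,k}})$ with $\alpha_{S,k}>0$ depending on the ramification index $q$ of the branch. Summing over the finitely many pairs $(S,k)$ yields $R(N)\le CN^{1-\alpha}$ with $\alpha=\min_{S,k}\alpha_{S,k}>0$.

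The principal obstacle, and the technical heart of the argument, is this final counting step: extracting a genuine power saving over the trivial bound $2N+1$. The crux is that a fractional Puiseux exponent $>1$ in the expansion of $s$ in $t^{-1/q}$ forces rational values $s(\tau)\in\mathbb{Q}$ to be exceptional, and quantifying how exceptional requires the combination of Newton-polygon analysis at infinity with an Eisenstein-type integrality condition on truncations of the Puiseux expansion. A more streamlined modern alternative invokes Siegel's theorem (or, for a sharper exponent, S.\,D.~Cohen's large-sieve bound) on integer points of the affine curves $h_{S,k}(t,s)=0$; but for the qualitative exponent $1-\alpha$ claimed in the statement, D\"orge's elementary analytic approach suffices.
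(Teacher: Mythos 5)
The first thing to say is that the paper does not prove this statement at all: it is D\"orge's 1926 effective form of Hilbert's Irreducibility Theorem, and the authors simply quote it with a citation to \cite{Do}. So there is no internal proof to compare against; what you have written is a sketch of the classical argument from the literature. Your skeleton is the right one: reduce reducibility of $f(X,\tau)$ via Gauss's lemma to the existence of a monic-up-to-leading-coefficient factor whose coefficients are elementary symmetric functions $\phi_{S,k}$ of a proper subset $S$ of the Puiseux branches, use transitivity of the Galois group to see that such a factorization forces a rational value of an algebraic function that is not in $\mathbb{Q}(t)$, and then count the integers $\tau$ at which the corresponding plane curve $h_{S,k}(\tau,s)=0$ acquires a rational point. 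One small overstatement: transitivity only tells you that not \emph{all} of $\phi_{S,1},\dots,\phi_{S,|S|}$ can lie in $\mathbb{Q}(t)$ (otherwise $\prod_{i\in S}(X-x_i)$ would be a rational factor of $f$ over $\mathbb{Q}(t)$); an individual $\phi_{S,k}$ may well be rational. This is cosmetic, since you only need one irrational $\phi_{S,k}$ per $S$.

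The genuine gap is the one you yourself flag as ``the technical heart'': the claim that for an irreducible $h(t,s)\in\mathbb{Z}[t,s]$ with $\deg_s h\ge 2$, the number of $\tau\in\mathbb{Z}\cap(-N,N)$ for which $h(\tau,s)=0$ has a rational solution is $O(N^{1-\alpha})$. Everything before this step is bookkeeping; this step \emph{is} the theorem, and as written you describe the tools (Puiseux expansion of $s$ in $t^{-1/q}$, size and integrality constraints on $\phi_{S,k}(\tau)$ coming from the fact that these values are algebraic integers up to a bounded denominator, Newton polygon at infinity, an Eisenstein-type bound on the denominators of the truncated expansion, and the resulting confinement of $\tau$ to a thin congruence-plus-size set) without carrying any of it out. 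In particular you never exhibit the positive constant $\alpha_{S,k}$ or show why a branch with a genuinely fractional exponent forces a power saving rather than, say, a density-zero set with no power saving. So the proposal reduces the statement to an equivalent counting assertion and stops. To close it you must either execute D\"orge's Puiseux--truncation argument in full, or explicitly invoke a proved quantitative result (D\"orge \cite{Do}, or S.~D.~Cohen's large-sieve bound, which gives the sharper $O(N^{1/2}\log N)$); as it stands the argument is a correct roadmap but not a proof.
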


We can now prove the main theorem of this section.

\begin{theorem}
\label{thm:all}
Let $t \notin \{-1,0,1\}$ be a rational number. Then the elliptic curve $E''$ that corresponds to any multiple  $[m]R$, for $m>1$, has full rational $2$-torsion.
\end{theorem}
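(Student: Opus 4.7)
The plan is to show that the cubic
$$g_m(X) := X^3 + \sigma_2 X^2 + \sigma_1\sigma_3 X + \sigma_3^2 \in \mathbb{Q}(t)[X],$$
whose roots are $-ab,-ac,-bc$ and which (after the affine changes of variable of Section \ref{sec4}) governs the $2$-torsion of $E''$, already splits completely over the function field $\mathbb{Q}(t)$. Once this is established, specializing at any rational $t_0 \notin \{-1,0,1\}$ immediately yields three rational roots and hence full rational $2$-torsion of the specialized curve $E''$.

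The first and main step is to show that $g_m$ is reducible over $\mathbb{Q}(t)$. Clearing denominators in $t$ and applying Gauss's lemma, I view $g_m$ as an essentially equivalent polynomial $G_m(X,t) \in \mathbb{Z}[X,t]$ whose irreducibility as a bivariate polynomial matches that of $g_m$ in $\mathbb{Q}(t)[X]$. Suppose, for contradiction, that $G_m$ is irreducible. By Theorem \ref{thm:Hilbert} (D\"orge), the number of integers $\tau$ with $|\tau| < N$ for which $G_m(X,\tau)$ is reducible is at most $C N^{1-\alpha}$ for absolute constants $C,\alpha > 0$. On the other hand, Corollary \ref{cor:integers} asserts that whenever $\tau \in \mathbb{Z}_{>1}$ is such that $\tau^2+1$ is squarefree, $E''$ has full rational $2$-torsion and so $G_m(X,\tau)$ factors into three linear factors over $\mathbb{Q}$. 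A standard sieve argument shows that the set of such $\tau$ has positive density $\prod_{p\equiv 1 \,(\mathrm{mod}\, 4)}(1-2/p^2) > 0$ (Hensel's lemma gives exactly two solutions of $x^2+1 \equiv 0 \pmod{p^2}$ when $p \equiv 1 \pmod 4$, none when $p \equiv 3 \pmod 4$, and $4 \nmid \tau^2+1$ automatically), so their count in $[1,N]$ grows linearly in $N$. For $N$ large this beats the bound $C N^{1-\alpha}$, a contradiction. Hence $g_m$ is reducible in $\mathbb{Q}(t)[X]$.

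Next I invoke the square-discriminant property built into the construction of $E$. The cubic $X^3 - \sigma_1 X^2 + \sigma_2 X - \sigma_3$, whose roots are $a,b,c$, has discriminant equal to the left-hand side of (\ref{mi3}), and the elliptic curve $E$ was obtained precisely by imposing this expression to be a square $y^2$; thus this discriminant lies in $\mathbb{Q}(t)^{\times 2}$ for every point $[m]R = (x,y) \in E(\mathbb{Q}(t))$. Since $\mathrm{disc}(g_m) = \sigma_3^2 \cdot \mathrm{disc}(X^3-\sigma_1 X^2+\sigma_2 X-\sigma_3)$, the discriminant of $g_m$ is also a square in $\mathbb{Q}(t)$. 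Therefore the Galois group of the splitting field of $g_m$ over $\mathbb{Q}(t)$ is contained in $A_3 = \mathbb{Z}/3\mathbb{Z}$, and combined with the rational root supplied by the previous step, the group must be trivial. Hence $g_m = (X-\alpha_1)(X-\alpha_2)(X-\alpha_3)$ with $\alpha_i \in \mathbb{Q}(t)$.

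For the conclusion, fix any $t_0 \in \mathbb{Q}\setminus\{-1,0,1\}$ at which $\sigma_1,\sigma_2,\sigma_3$ are defined. Each $\alpha_i \in \mathbb{Q}(t)$ is integral over the discrete valuation ring $\mathcal{O}_{t_0} \subset \mathbb{Q}(t)$ (being a root of the monic polynomial $g_m$ whose coefficients lie in $\mathcal{O}_{t_0}$); since a DVR is integrally closed in its fraction field, $\alpha_i \in \mathcal{O}_{t_0}$, and therefore $\alpha_i(t_0) \in \mathbb{Q}$. Consequently the $2$-torsion $x$-coordinates of $E''$ are rational, as required. The delicate point in the argument is the density/counting comparison in the first step: one needs the honest linear lower bound $\gg N$ for integers $\tau$ with $\tau^2+1$ squarefree to dominate D\"orge's power-saving bound $N^{1-\alpha}$, and this is the quantitative lever that upgrades Corollary \ref{cor:integers} from integer to arbitrary rational $t$.
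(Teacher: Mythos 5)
Your proposal is correct and follows essentially the same route as the paper: D\"orge's theorem (Theorem \ref{thm:Hilbert}) combined with Corollary \ref{cor:integers} and the positive density of integers $\tau$ with $\tau^2+1$ squarefree forces reducibility of the cubic over $\mathbb{Q}(t)$, and the square discriminant then upgrades this to complete splitting. You merely make explicit several details the paper leaves implicit (the Euler product for the density, the $A_3$ Galois argument, and the integrality argument justifying specialization at $t_0$), all of which are sound.
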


\begin{proof}
Let
$$
g(X,t)=X^3+\frac{((t^2+1)^2x^{-1}+1)^2}{4}X^2+\frac{t^2((t^2+1)^2x^{-2}+x^{-1})}{2} X + \frac{t^4x^{-2}}{4}\in \mathbb{Q}(t)[X],
$$
where $x= x([m]R)\in \mathbb{Q}(t)$ (i.e. $E''$ is given by the equation $y^2=g(X,t)$).

Let $f(X,t)\in \mathbb{Z}[X,t]$ be a polynomial that one gets by multiplying out denominators in $g(X,t)$. It is enough to prove that that $f(X,t)$ is reducible. Suppose that $f(X,t)$ is irreducible. Then Theorem \ref{thm:Hilbert} implies that $\lim_N \frac{R(N)}{N}=0$. On the other hand, Corollary \ref{cor:integers} implies that $f(X,t)$, for $t\in \mathbb{Z}$, is reducible whenever $t^2+1$ is squarefree. It is well known that $t^2+1$ is squarefree for a positive proportion of integers $t$ (see \cite{E}), hence $\lim_N \frac{R(N)}{N} \ne 0$, and $f(X,t)$ is reducible. It is easy to see that $f(X,t)$ splits as a product of three linear polynomials in $X$, which implies that $E''$ has full rational $2$-torsion for every $t\in \mathbb{Q}$.
\end{proof}

\begin{remark}
Note that in order to rule out a ramification at $3$ of the field extension $\mathbb{Q}(E''[2])/\mathbb{Q}$ in Corollary \ref{cor:conclusion} we needed to know that $v_3( y([m]R)) \le 0$. This was needed for the proof of Corollary \ref{cor:conclusion} since there is $\mathbb{Z}/3\mathbb{Z}$ extension $\mathbb{Q}(\alpha)$ unramified outside $3$, where $\alpha^3-3\alpha+1=0$. Also, for example, for $t=17$ and point $P=[35000, 40986000]$ (note that $3|40986000$), one gets an elliptic curve $E''$ with additive reduction at $3$.

One could avoid mod $3$ analysis altogether and obtain the Theorem \ref{thm:all} by restricting in Corollary \ref{cor:integers} on $t$'s divisible by $3$ (then multiplicative reduction at $3$ immediately follows) and by using in the proof of Theorem \ref{thm:all} the fact that $9t^2+1$ is squarefree for positive proportion of $t$'s.
\end{remark}

\begin{remark}
Not all triples $\{a,b,c\}$ obtained using this construction are rational.
For example for $t=31$ (when the rank of $E$ is $2$) %du
and point $(x,y)=[-150072,682327360]$ (which is not a multiple of $R$) %du
the curve $E''$ has no rational $2$-torsion. It has additive reduction at $13, 31$ and $37$. Note that $31^2+1=2\cdot 13 \cdot 37$, and $v_{13}(x)=2$, $v_{37}(x)=1$ and $x \equiv -1 \pmod{31}$.
\end{remark}

%Concluding section
\section{Explicit formulas}

In the previous section we proved indirectly (using Hilbert's Irreducibility Therem) that Diophantine triples $\{a,b,c \}$ associated to the multiples $mR$, for $m>1$, of point $R=[0,(t^2+1)^3] \in E$ are rational. In this section, following referee's suggestion, we provide explicit formulas for these triples.

The two torsion subgroup $E''[2]$ (see \eqref{eq:E''}) defines the plane curve $C$ over $\mathbb{Q}(t)$ given by the equation
\[
C: \qquad X^3+\frac{((t^2+1)^2u+1)^2}{4}X^2+\frac{t^2((t^2+1)^2u^2+u)}{2} X + \frac{t^4u^2}{4}=0.
\]

By resolving singularity at the point $(X,u)=(0,0)$ we obtain a rational parametrization of this genus $0$ curve

\[
X(w)=-\frac{w^2}{4(t^2+1)^2},\qquad u(w)=\frac{w-t^2-1}{(t^2+1)(-w^2/4+t^2)}X, \quad \textrm{where}\quad w\in\mathbb{Q}(t).
\]

From the construction of $E''$, we know that $u^{-1}(w)$ defines the $x$-coordinate of a $\mathbb{Q}(t)$-rational point on $E$. By substituting $u^{-1}(w)$ in \eqref{eq:E}, we obtain a curve which is birationally equivalent to the curve
\[
y^2=(w-(t^2+1))(w^3-3t^2w-t^2(t^2+1)),
\]
which again is birationally equivalent to the elliptic curve $E_{*}$
\begin{equation}\label{eq:E_}
E_{*}: y^2 = x^3 + 3(t^2-3t+1)(t^2+3t+1)x^2+3(t^2+1)^2(t^4-178t^2+1)x+(t^2+1)^2(t^4+110t^2+1)^2.
\end{equation}
The curve $E_{*}$ is $3$-isogenous to the curve $E$ (the kernel of the isogeny $\phi:E_{*} \rightarrow E$ is the subgroup of $E_{*}(\mathbb{Q}(t))$ of order $3$ generated by the point $T=[-(t^2-6t+1)(t^2+6t+1),27t(t-1)^2(t+1)^2 ]$). Moreover, we have that $\phi(P)=R$, where $P=[-(t^2+1)(t^2+18t+1), 27t(t+1)^2(t^2+1)]\in E_{*}(\mathbb{Q}(t))$. We can express $X$ and $u$ using coordinates $(x,y)$ on $E_{*}$. We have that $X=X(x,y)=X(w)$ and $u=u(w)$ where
$$w=w(x,y)=\frac{27y+9rx+s}{2(9x+v)},$$ with $(v,r,s)=\left(\frac{5}{4} t^4 + \frac{59}{2}t^2 + \frac{5}{4}, -\frac{3}{2}( t^2 + 1), -\frac{27}{8}(t-1)^2(t+1)^2(t^2+1)\right).$ One can check that for any point $Q\in E_{*}$, we have that
$u(w(Q))^{-1}$ is equal to the $x$-coordinate of the point $\phi(Q)+R\in E$, where $w(Q)=w\left(x(Q),y(Q)\right)$.

In particular, the triple $\{a, b, c\}$ associated to the multiple $mR$, with $m>1$, can be written in the following way. Let $X_1=-\left(\frac{t^2+1}{t}\right)^2 X((m-1)P)-1$, $X_2=-\left(\frac{t^2+1}{t}\right)^2 X((m-1)P+T)-1$ and $X_3=-\left(\frac{t^2+1}{t}\right)^2X((m-1)P+2T)-1$, i.e. $X_1=ab$, $X_2=ac$ and $X_3=bc$. Then
\[
\{a, b, c \}=\left\{ \sqrt{\frac{X_1 X_2}{X_3}}, \sqrt{\frac{X_1 X_3}{X_2}}, \sqrt{\frac{X_2 X_3}{X_1}}\right\}.
\]

\section{Some concluding remarks}  \label{sec6}

\begin{remark}
\label{rmz2z6}
Note that for rational Diophantine triples $\{a,b,c\}$ satisfying condition (\ref{mi1}), the induced elliptic curve has torsion group  $\mathbb{Z}/2\mathbb{Z} \times \mathbb{Z}/6\mathbb{Z}$, since it contains the point $S$ of order three. It is an open problem whether this torsion group is possible for elliptic curves induced by an integer Diophantine triple (see e.g. \cite{D-M,Mi}). On the other hand, examples of elliptic curves, induced by rational Diophantine triples, with torsion group $\mathbb{Z}/2\mathbb{Z} \times \mathbb{Z}/6\mathbb{Z}$ and rank equal to $1$, $2$, $3$ and $4$ can be found in \cite{D-mw} (for examples of elliptic curves with torsion groups $\mathbb{Z}/2\mathbb{Z} \times \mathbb{Z}/4\mathbb{Z}$ and $\mathbb{Z}/2\mathbb{Z} \times \mathbb{Z}/8\mathbb{Z}$ and high rank see \cite{D-P,D-mw}). These examples were obtained from the condition $3P=\mathcal{O}$. The first example of a rational Diophantine triple with positive elements satisfying the condition $3S=\mathcal{O}$ was given in \cite{M-thesis}. This triple was
\begin{equation}
\label{mi-triple}
\left\{ \frac{36534805866201747}{2323780774755404},
\frac{1065197767305747}{13609226201091404},
\frac{3802080647508196}{6238332600753747} \right\},
\end{equation}
and it can be obtained by taking $\sigma_3=3/4$ in (\ref{mi3}). As before, we can transform the quartic to the elliptic curve $Y^2 = X^3+1512X+33588$. This curve has rank $1$ with the generator $U=[-11, 125]$, and the point $6U$ is the smallest multiple of $U$ which produces the triple with all positive elements, which is exactly the triple (\ref{mi-triple}). By applying the construction described above, we can extend this triple to infinitely many rational Diophantine sextuples. One such extension is by the following three elements
\begin{eqnarray*}
\frac{143947705777192337861060209232361164451}{159554724645105598216911731751641945996}, \\[8pt]
\frac{27566706033755538837165550223247346480484}{28811406145997336392588207503703089363}, \\[8pt]
\frac{5959833363761715860447368794188813530156}{3132578990197106752312648160330628526617}.
\end{eqnarray*}
\end{remark}

\begin{remark} \label{rmrank}
Our parametric formula for the rational Diophantine sextuples $\{a,b,c,d,e,f\}$ from Section \ref{sec3} can be used to obtain an elliptic curve over $\mathbb{Q}(t)$ with reasonably high rank. Indeed, consider the curve
$$
C: \quad y^2= (dx+1)(ex+1)(fx+1).
$$
It has three obvious points of order two, but also points with $x$-coordinates
$$
0, \quad \frac{1}{def},\quad a, \quad b, \quad c.
$$
We claim that these five points are independent points of infinite order on the curve $C$ over $\mathbb{Q}(t)$. Since the specialization map is a homomorphism, it suffices to find one parameter $t$ for which these five points become independent points of infinite order on the specialized curve $C_t$ over $\mathbb{Q}$. It is easy to check that $t=2$ is one such specialization (by checking that the discriminant of the corresponding height matrix is nonzero). Therefore, we proved that the rank of $C$ over $\mathbb{Q}(t)$ is $\geq 5$, which ties the published record from \cite{A-D-P} for the generic rank of elliptic curves over $\mathbb{Q}(t)$ induced by Diophantine triples.
\end{remark}

\begin{remark}
The question whether there exist any rational Diophantne septuple remains open.
It seems unlikely that a construction based on the application of Proposition \ref{matija} could be used to obtain a rational Diophantine septuple. The point $S'=[1,rst]$ must be of order $3$ (see the discussion after Proposition \ref{matija}), which makes it unsuitable for extending the given triplet to the septuple (at least with our approach), since the natural choice for extending the set, the number $x(T+2S')$, is already in the set (it is equal to $x(T-S')$).

\end{remark}

\section*{Acknowledgements}
We would like to thank Filip Najman for his comments on the first draft of the paper, and to the referee for suggesting us to describe the two torsion of the elliptic curve $E''[2]$ explicitly over the function field of the elliptic curve that is 3-isogenous to the elliptic curve $E$. A.D. was supported by the Croatian Science Foundation under the project no. 6422.

%References

\end{document}